\newtheorem{theorem}{Theorem}[section]
\newtheorem{proposition}[theorem]{Proposition}
\newtheorem{lemma}[theorem]{Lemma}
\newtheorem{corollary}[theorem]{Corollary}
\newtheorem{definition}[theorem]{Definition}
\newtheorem{example}[theorem]{Example}
\newtheorem{remark}[theorem]{Remark}
\newtheorem{assumption}[theorem]{Assumption}
\newcommand{\wto}{\rightharpoonup}
\newcommand{\wsto}{\overset{*}{\wto}}
\newcommand{\toto}{\rightrightarrows}
\newcommand{\real}{{\mathbb{R}}}
\newcommand{\nat}{{\mathbb{N}}}
\newcommand{\veps}{\varepsilon}
\newcommand{\osc}{\operatornamewithlimits{osc}}
\newcommand{\playr}{\mathcal{P}_r}
\newcommand{\stopr}{\mathcal{S}_r}
\begin{document}

\title[Newton and Bouligand Derivatives of the Play and the Stop]{Newton and
Bouligand Derivatives of the Scalar Play and Stop Operator}

\author[M. Brokate]{Martin Brokate}

\address{Martin Brokate \hfill\break
Dept. of Mathematics, Technical University of Munich, Boltzmannstr. 3,
D-85747 Garching, Germany}
\email{brokate@ma.tum.de}

\subjclass[2010]{47H30, 47J40, 49J52, 49M15, 58C20}
\keywords{rate independence, hysteresis operator, Newton derivative, Bouligand derivative,
play, stop, sensitivity, maximum functional, variational inequality, measurable selector,
semismooth, chain rule}

\begin{abstract}
We prove that the play and the stop operator possess Newton and Bouligand
derivatives, and exhibit formulas for those derivatives. 
The remainder estimate is given in a strengthened form, and a
corresponding chain rule is developed.
The construction of the Newton derivative ensures that the mappings involved 
are measurable.
\end{abstract}

\maketitle

\section{Introduction.}
The aim of this paper is to show that the play and the stop operator possess
Newton as well as Bouligand derivatives, and to compute those derivatives.
Newton derivatives are needed when one wants to solve equations
\[
F(u) = 0
\]
for nonsmooth operators $F$ by Newton's method with a better than linear
convergence rate. Bouligand derivatives are closely related to Newton derivatives,
and can be used to provide sensitivity results as well as optimality conditions
for problems involving nonsmooth operators.

The scalar play operator  and its twin, the scalar stop operator, act on
functions $u:[a,b]\to\real$ and yield functions $w = \playr[u;z_0]$ and
$z = \stopr[u;z_0]$ from $[a,b]$ to $\real$. The number $z_0$ plays the
role of an initial condition. Their formal definition, in the spirit of
\cite{KDE70}, is given below in Section 6; alternatively, they arise as solution 
operators of the evolution variational inequality
\begin{subequations}\label{i.1}
\begin{gather}
\dot{w}(t)\cdot(\zeta - z(t)) \le 0 \,,
\quad \text{for all $\zeta\in [-r,r]$,}
\\
z(t) \in [-r,r] \,, \quad z(a) = z_0\in [-r,r] \,,
\\
w(t) + z(t) = u(t) \,. \label{i.1.3}
\end{gather}
\end{subequations}
The play and the stop operator are rate-independent; in fact, they
constitute the simplest nontrivial examples of rate-independent operators
\cite{Vis,BS,K,MR} if one disregards relays whose nature is inherently
discontinuous. Due to (\ref{i.1.3}),
their mathematical properties are closely related.

A lot is known about the play and the stop. Viewed as operators between
function spaces, their typical regularity is Lipschitz (or less).
In particular, they are not differentiable in the classical sense. The question
whether weaker derivatives (e.g., directional derivatives) exist was
addressed, to the author's knowledge, for the first time in \cite{BK}
where it was shown that the play and the stop are directionally
differentiable from $C[a,b]$ to $L^p(a,b)$ for $p < \infty$.
(This is not to be confused with the existence and form of
time derivatives of functions like $t\mapsto \playr[u;z_0](t)$,
for which there are many results available.)

The results below serve to narrow the gap between differentiability
and non-differentiability of rate-independent operators. Their proofs
given here are
based on the same idea as used in \cite{BK}, namely, to locally
represent the play as a composition of operators whose main
ingredient is the cumulated maximum.

It is natural to ask whether it is possible to prove weak differentiability
of the play and the stop operator in the framework of the variational 
formulation (\ref{i.1}). Indeed, for elliptic variational inequalities, 
a large body of literature is available, going back to \cite{Mig}. 
In that case, the solution operator is closely linked to the metric
projection onto convex sets whose differentiability properties also
have been analyzed for a long time. 
For evolution variational inequalities of parabolic type,
we refer to the recent contribution \cite{Chr} and the literature cited
there. For rate independent variational inequalities, corresponding
results do not seem to exist, not even for the ODE case given in (\ref{i.1}).

Our main results are given in Theorem \ref{po.gnd} for Newton
differentiability and Theorem \ref{bd.p} for Bouligand differentiability
of the play. They are based on corresponding results for the
maximum functional (Proposition \ref{nmf.nbder}) and the cumulated
maximum operator (Proposition \ref{am.newder}). The extension to
the parametric play is given in Proposition \ref{pp.pnd}.

When attempting to prove Newton differentiability of the play, some issues
arise which complicate matters and are, at least in part, responsible
for the length of this paper.
First, the construction of the Newton derivative of the play leads
to a set-valued derivative in a natural manner. Its elements $L$ 
should have the property that the first order approximations
$\delta w = L \delta u$ are measurable functions. 
Since Newton derivatives are not obtained as limits, 
and we are dealing with operators between function spaces,
measurability becomes an issue.
Second, with regard to the form of the remainder, we aim
at a somewhat stronger result than standard Newton 
differentiability,
having in mind applications to partial differential equations.
Third, we want to treat not only a single play operator, but 
also a parametric family of play operators, having in mind
problems where play operators e.g. are distributed continuously
over space. Again, the problem of measurability has to be solved.

The proofs of Newton and of Bouligand differentiability are rather
similar; for Bouligand derivatives, some of the problems mentioned
above do not even arise. Nevertheless, we have chosen to elaborate 
the proofs for both cases to some extent; the details are 
somewhat cumbersome and should not be placed too much as a burden 
on the reader.

\section{Notions of derivatives.}
We collect some established notions of derivatives for mappings
\[
F: U\to Y \,,\quad U\subset X \,,
\]
where $X$ and $Y$ are normed spaces, and $U$ is an open subset of $X$.
These notions are classical, but the terminology is not uniform in
the literature.
\begin{definition}\label{nwd.wdef}
(i) The limit, if it exists,
\begin{equation}\label{nwd.wdef.1}
F'(u;h) := \lim_{\lambda\downarrow 0} \frac{F(u+\lambda h) - F(u)}{\lambda} \,,\quad
u\in U \,,\,h\in X \,,
\end{equation}
is called the \textbf{directional derivative} of $F$ at $u$ in the direction $h$.
It is an element of $Y$.
\hfill\\
(ii) If the directional derivative satisfies
\begin{equation}\label{nwd.wdef.2}
F'(u;h) = \lim_{\lambda\downarrow 0} \frac{F(u+\lambda h + r(\lambda)) - F(u)}{\lambda}
\end{equation}
for all functions $r:(0,\lambda_0)\to X$ with $r(\lambda)/\lambda \to 0$
as $\lambda \to 0$, it is called the \textbf{Hadamard derivative}
of $F$ at $u$ in the direction $h$.
\hfill\\
(iii) If the directional derivative exists for all $h\in X$ and satisfies
\begin{equation}\label{nwd.wdef.3}
\lim_{h\to 0} \frac{\|F(u+ h) - F(u) - F'(u;h)\|}{\|h\|} = 0 \,,
\end{equation}
it is called the \textbf{Bouligand derivative} of $F$ at $u$ in the direction $h$.
\hfill\\
(iv) If the Bouligand derivative has the form $F'(u;h) = Lh$ for some linear
continuous mapping $L:X\to Y$, then $L$ is called the
\textbf{Fr\'echet derivative} of $F$ at $u$ and denoted as $DF(u)$.
\hfill\\
(v) The mapping $F$ is called directionally (Hadamard, Bouligand, Fr\'echet, resp.)
differentiable at $u$ (in $U$, resp.), if the corresponding derivative exists
at $u$ (for all $u\in U$, resp.) for all directions $h\in X$.
\hfill$\Box$
\end{definition}
In the definition above, it is tacitly understood that the limits are taken
in the sense ``not equal 0''.

We have $F'(u;\lambda h) = \lambda F'(u;h)$ if $\lambda\ge 0$. This as
well as the following well-known facts are elementary consequences of the
above definitions.
\begin{proposition}\label{nwd.dirhad}
Let $F$ be directionally differentiable and locally Lipschitz continuous at
$u\in U$. Then $F$ is Hadamard differentiable at $u$. Moreover, if $\ell_u$
is a local Lipschitz constant for $F$ at $u$,
\begin{equation}\label{nwd.dirhad.1}
\|F'(u;h_1) - F'(u;h_2)\| \le \ell_u \|h_1 - h_2\| \qquad \forall \; h_1,h_2\in X\,.
\end{equation}
Consequently, if $\ell$ is a global Lipschitz constant for $F$,
\begin{equation}\label{nwd.dirhad.2}
\|F(u+h) - F(u) - F'(u;h)\| \le 2\ell \|h\| \qquad \forall \; h\in X\,.
\end{equation}
\hfill$\Box$
\end{proposition}

\begin{corollary}\label{nwd.dhb}
If $F$ is locally Lipschitz, then directional and Hadamard differentiability
at $u\in U$ are equivalent, and are implied by Bouligand differentiability
at $u$.
\hfill$\Box$
\end{corollary}

In terms of a remainder function, the definition (\ref{nwd.wdef.3}) of Bouligand
differentiability at $u$ is equivalent to
\begin{equation}\label{nwd.rem.1}
\|F(u+h) - F(u) - F'(u;h)\| \le \rho_u(\|h\|)\cdot\|h\| \,,
\end{equation}
where $\rho_u(\delta) \downarrow 0$ for $\delta\downarrow 0$. 
In view of (\ref{nwd.dirhad.2}), we may assume that $\rho_u$ is globally bounded,
\begin{equation}\label{nwd.rem.2}
\rho_u \le 2\ell \qquad \forall \; u\in U\,,
\end{equation}
if $\ell$ is a global Lipschitz constant for $F$.

The notion of a Newton derivative is more recent.
A mapping $G: U \to \mathcal{L}(X,Y)$, the space of all linear and continuous
mappings from $X$ to $Y$, is called a Newton derivative of $F$ in $U$,
if
\begin{equation}\label{nwd.ndef.0}
\lim_{h\to 0} \frac{\|F(u+h) - F(u) - G(u+h)h\|}{\|h\|} = 0
\end{equation}
holds for all $u\in U$.
It is never unique; for example, modifying $G$ at a single point
does not affect the validity of (\ref{nwd.ndef.0}) in $U$.

It has turned out to be natural to allow Newton derivatives to be set-valued.
For set-valued mappings we write ``$f:X\toto Y$''
instead of ``$f:X\to \mathcal{P}(Y)\setminus\emptyset$''.

\begin{definition}\label{nwd.ndef}
A mapping $G: U \toto \mathcal{L}(X,Y)$ is called a \textbf{Newton derivative}
of $F$ in $U$, if
\begin{equation}\label{nwd.ndef.1}
\lim_{h\to 0} \sup_{L\in G(u+h)} \frac{\|F(u+h) - F(u) - L h\|}{\|h\|} = 0
\end{equation}
holds for all $u\in U$. $G$ is called \textbf{locally bounded} if
for every $u\in U$ the sets $\{\|L\|: L\in G(v),\|v-u\|\le \delta\}$ are
bounded for some suitable $\delta = \delta(u)$.
$G$ is called \textbf{globally bounded} if these bounds can be chosen
independently from $u$.
\end{definition}
It is well known that if $F$ is continuously Fr\'echet differentiable in $U$,
then $G(u) = \{DF(u)\}$ is a single-valued Newton derivative of $F$ in $U$.

We write (\ref{nwd.ndef.1}) in remainder form,
\begin{equation}\label{nwd.ndef.4}
\sup_{L\in G(u+h)} \|F(u+h) - F(u) - L h\| \le \rho_u(\|h\|)\cdot\|h\| \,,
\end{equation}
where $\rho_u(\delta)\downarrow 0$ as $\delta\downarrow 0$.
If $\ell$ is a global Lipschitz constant for $F$ and $c_G$ is a global bound
for the norms $\|L\|$ of the elements $L\in G(U)$, we may assume that $\rho_u$
is globally bounded,
\begin{equation}\label{nwd.ndef.5}
\rho_u \le \ell + c_G \qquad \forall \; u\in U\,,
\end{equation}
as in the case of the Bouligand derivative.

If $G: U \toto \mathcal{L}(X,Y)$ is a Newton derivative of $F$ in $U$,
then so is every $\tilde{G}: U \toto \mathcal{L}(X,Y)$ satisfying
$\tilde{G}(u) \subset G(u)$ for all $u\in U$. In particular, every
\textbf{selector} $S: U\to L(X,Y)$ of $G$, that is, $S(u) \in G(u)$
for all $u\in U$, yields a single-valued Newton derivative of $F$ in $U$.

We now consider the following situation. 
The domain of definition $U$ of $F$ can be represented as
\begin{equation}\label{nwd.part.0}
U = \bigcup_{n\in \mathbb{N}} U_n \,,
\end{equation}
where $U_n \subset U$ are open sets with $U_n \subset U_{n+1}$ for all $n$,
and $U_0 = \emptyset$.
We want to obtain a Newton derivative of $F$ on $U$ from Newton derivatives
of $F$ on $U_n$. This can be done in the following setting.
Let $V_n\subset U$ be open sets with
\begin{equation}\label{nwd.part.01}
\overline{V}_n \subset U_n \cap V_{n+1} \quad \text{for all $n\in\mathbb{N}$,}
\quad \bigcup_{n\in\mathbb{N}} V_n = U \,.
\end{equation}
\begin{proposition}\label{nwd.part}
Let $G_n$ be a Newton derivative of $F$ on $U_n$, $n\in \mathbb{N}$,
with the remainder $\rho_{n,u}$ according to (\ref{nwd.ndef.4}).
Then in the situation just described above, the definition
\begin{equation}\label{nwd.part.1}
G(u) = G_n(u) \,,\quad \text{if $u\in \overline{V}_n \setminus \overline{V}_{n-1}$,}
\end{equation}
yields a Newton derivative $G:U\toto \mathcal{L}(X;Y)$ of $F$ on $U$
with the remainder
\begin{equation}\label{nwd.part.2}
\rho_u = \max\{\rho_{n,u},\rho_{n+1,u}\}
\quad \text{if $u\in \overline{V}_n \setminus \overline{V}_{n-1}$.}
\end{equation}
\end{proposition}
\begin{proof}
By construction,
\[
U = \bigcup_{n\in\mathbb{N}} \overline{V}_n \setminus \overline{V}_{n-1} \,,
\]
the union being disjoint. Let $u\in U$, assume that
$u\in \overline{V}_n \setminus \overline{V}_{n-1}$.
We choose $\delta > 0$ such that $B_{\delta}(u) = \{v: \|v-u\| < \delta\}$
satisfies, see (\ref{nwd.part.01}),
\[
B_{\delta}(u) \cap \overline{V}_{n-1} = \emptyset \,,\quad
B_{\delta}(u) \subset U_n \cap V_{n+1} \,.
\]
Let $h\in X$, $\|h\| < \delta$, let $L\in G(u+h)$.
If $u+h\in \overline{V}_n$, then $u+h \in \overline{V}_n \setminus \overline{V}_{n-1}$,
$u+h\in U_n$ and $L\in G_n(u+h)$, so
\[
\|F(u+h) - F(u) - Lh\| \le \rho_{n,u}(\|h\|) \|h\| \,.
\]
If $u+h \notin \overline{V}_n$, then
$u+h\in \overline{V}_{n+1} \setminus \overline{V}_{n}\subset U_{n+1}$
and $L\in G_{n+1}(u+h)$, so
\[
\|F(u+h) - F(u) - Lh\| \le \rho_{n+1,u}(\|h\|) \|h\| \,.
\]
This proves the assertions.
\end{proof}

\begin{remark}\label{nwd.partu}
{\rm
If we have $G_n(u)\subset G_{n+1}(u)$ for all $n$ and $u$, we may dispense
with the sets $V_n$ and simply define a Newton derivative $G$ of $F$ on $U$ by 
\[
G(u) = G_n(u) \,,\quad \text{if $u\in U_n \setminus U_{n-1}$.}
\]
However, in the construction of the Newton derivative of the play given below,
this property is not satisfied.
\hfill$\Box$
}
\end{remark}

%

The following result (Lemma 8.11 in \cite{IK}) shows that
Bouligand and Newton derivatives are closely related.

\begin{proposition}\label{nwd.nbou}
Let $F:U\to Y$ possess the single-valued Newton derivative
$D^NF: U \to \mathcal{L}(X,Y)$. Then $F$ is Bouligand differentiable
at $u\in U$ if and only if the limit
$\lim_{\lambda\downarrow 0} D^N F(u+\lambda h)h$ exists uniformly
w.r.t. $h\in X$ with $\|h\| = 1$. In this case,
\begin{equation}\label{nwd.nbou.1}
F'(u;h) = \lim_{\lambda\downarrow 0} D^N F(u+\lambda h)h \,.
\end{equation}
\hfill$\Box$
\end{proposition}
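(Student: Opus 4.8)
The plan is to prove both implications by one and the same device: an algebraic splitting of the relevant difference into a \emph{Newton error term} and a \emph{Bouligand error term}, each of which is shown to vanish uniformly in the direction. Two elementary facts will be used repeatedly. First, the directional derivative is positively homogeneous of degree one in $h$, i.e.\ $F'(u;\lambda h)=\lambda F'(u;h)$ for $\lambda>0$; hence it suffices to control all quantities on the unit sphere $\{\|h\|=1\}$ and then extend by scaling (with $F'(u;0)=0$). Second, both the defining property of the single-valued Newton derivative, cf.\ (\ref{nwd.ndef.0}) with $G=D^NF$, and the Bouligand remainder condition (\ref{nwd.wdef.3}) are stated as limits as an \emph{increment} tends to $0$ in $X$; evaluating them along an increment of the form $\lambda h$ with $\|h\|=1$ therefore automatically yields bounds that are uniform in $h$. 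I write $g(h):=\lim_{\lambda\downarrow 0}D^NF(u+\lambda h)h$ whenever this limit exists.

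Assume first that $g(h)$ exists uniformly for $\|h\|=1$. To produce the directional derivative, for $\|h\|=1$ I would split
\begin{align*}
\frac{F(u+\lambda h)-F(u)}{\lambda}-g(h)
&=\frac{F(u+\lambda h)-F(u)-D^NF(u+\lambda h)(\lambda h)}{\lambda}\\
&\quad+\bigl(D^NF(u+\lambda h)h-g(h)\bigr).
\end{align*}
The first term equals the Newton remainder quotient (\ref{nwd.ndef.0}) at the increment $\lambda h$ of norm $\lambda$, hence tends to $0$; the second tends to $0$ by hypothesis. So $F'(u;h)=g(h)$ on the unit sphere, and I extend $F'(u;\cdot)$ homogeneously, which is then easily checked to be the directional derivative in every direction. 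For the remainder estimate of Definition \ref{nwd.wdef}(iii), I write a nonzero increment as $h=\lambda v$ with $\lambda=\|h\|$, $\|v\|=1$, and use the same splitting:
\begin{align*}
\frac{\|F(u+h)-F(u)-F'(u;h)\|}{\|h\|}
&\le\frac{\|F(u+\lambda v)-F(u)-D^NF(u+\lambda v)(\lambda v)\|}{\lambda}\\
&\quad+\|D^NF(u+\lambda v)v-g(v)\|.
\end{align*}
The first term is precisely the Newton remainder quotient at the increment $h$, which tends to $0$ as $\|h\|\to0$ by (\ref{nwd.ndef.0}), with a bound not depending on $v$; the second tends to $0$ uniformly in $\|v\|=1$ by hypothesis. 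Hence $F$ is Bouligand differentiable at $u$ with $F'(u;h)=g(h)$, which is (\ref{nwd.nbou.1}).

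Conversely, assume $F$ is Bouligand differentiable at $u$. For $\|h\|=1$ and $\lambda>0$, using $F'(u;\lambda h)=\lambda F'(u;h)$, I would decompose
\begin{align*}
D^NF(u+\lambda h)h-F'(u;h)
&=\frac{D^NF(u+\lambda h)(\lambda h)-\bigl(F(u+\lambda h)-F(u)\bigr)}{\lambda}\\
&\quad+\frac{\bigl(F(u+\lambda h)-F(u)\bigr)-F'(u;\lambda h)}{\lambda}.
\end{align*}
The norm of the first term equals the Newton remainder quotient at the increment $\lambda h$, and the norm of the second equals the Bouligand remainder quotient $\|F(u+\lambda h)-F(u)-F'(u;\lambda h)\|/\|\lambda h\|$; by the observation made in the first paragraph, both tend to $0$ as $\lambda\downarrow0$ uniformly over $\|h\|=1$. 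Hence $D^NF(u+\lambda h)h\to F'(u;h)$ uniformly on the unit sphere, as required, and the limit is $F'(u;h)$.

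The only point that needs care — and essentially the only content beyond the triangle inequality and positive homogeneity — is the uniformity bookkeeping: one must observe that the Newton condition and the Bouligand condition are genuine limits as an increment vanishes in $X$, so that inserting the increment $\lambda h$ with $\|h\|=1$ already produces estimates uniform in $h$ on the unit sphere. This is exactly what lets the (pointwise-looking) Newton property combine with the Bouligand remainder to yield the uniform limit in the statement; no further hypothesis is needed.
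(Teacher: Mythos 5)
The paper does not include a proof of this proposition; it simply cites Lemma~8.11 of \cite{IK}, so there is no in-text argument to compare against. Your proof is correct: the two triangle-inequality splittings, together with the elementary observations that (a) the Newton and Bouligand remainder conditions are limits as the increment tends to~$0$ in $X$ and hence are automatically uniform over directions once the increment is written as $\lambda h$ with $\|h\|=1$, and (b) the directional derivative is positively homogeneous so that it suffices to work on the unit sphere, give a complete argument for both implications and for the formula $F'(u;h)=\lim_{\lambda\downarrow 0}D^{N}F(u+\lambda h)h$; this is the natural and essentially standard route to the result, and almost certainly the one taken in the cited reference.
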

\section{The maximum functional}

We consider $\varphi:C[a,b]\to\real$,
\begin{equation}\label{mfu.1}
\varphi(u) = \max_{s\in [a,b]} u(s) \,.
\end{equation}
The functional $\varphi$ is convex, positively 1-homogeneous
and globally Lipschitz continuous with Lipschitz constant 1,
w.r.t. the maximum norm on $C[a,b]$.
By convex analysis, it is directionally (and thus, Hadamard) differentiable.
An explicit formula for the directional derivative is given by
(see e.g. \cite{Gir} for a direct proof)
\begin{equation}\label{mfu.2}
\varphi'(u;h) = \max_{s\in M(u)} h(s) \,,
\end{equation}
where
\begin{equation}\label{mfu.3}
M(u) = \{\tau\in [a,b],\,u(\tau) = \varphi(u)\}
\end{equation}
is the set where $u$ attains its maximum.

Let us denote the dual of $C[a,b]$ by $C[a,b]^*$; it consists of all signed
regular Borel measures on $[a,b]$. The subdifferential of $\varphi$ is defined
as usual as the set-valued mapping $\partial\varphi:C[a,b]\toto C[a,b]^*$
given by
\begin{equation}\label{mfu.4}
\partial\varphi(u) = \{ \mu: \text{$\mu\in C[a,b]^*$,
$\varphi(v) - \varphi(u) \ge \langle \mu, v-u \rangle$ for all $v\in C[a,b]$}\} \,.
\end{equation}
It is not difficult to check that
\begin{equation}\label{mfu.5}
\partial\varphi(u) = \{ \mu: \text{$\mu\in C[a,b]^*$,
supp$(\mu) \subset M(u)$, $\mu \ge 0$, $\|\mu\| = 1$}\} \,.
\end{equation}
In particular, if $u$ has a unique maximum at $r\in [a,b]$, that is, $M(u) = \{r\}$,
then $\partial\varphi(u) = \{\delta_r\}$, where $\delta_r$ denotes
the Dirac delta at $r$.

A side remark (we will not use this):
the directional derivative is linked to the subdifferential by the
``max formula'' (see \cite{BaCo}, Theorem 17.19, for the Hilbert space case)
\[
\varphi'(u;h) = \max_{\mu\in\partial\varphi(u)} \langle \mu,h\rangle \,.
\]
The subdifferential is a natural candidate for a Newton derivative of a
convex functional.
However, the subdifferential of $\varphi:C[a,b]\to\real$ is not a Newton
derivative of $\varphi$, and $\varphi$ is not Bouligand differentiable.
The following example shows that this is true even if we restrict
$\varphi$ to $W^{1,1}(a,b)$.

Here and in the sequel we use the norm
\[
\|u\|_{W^{1,p}} = |u(a)| + \|u'\|_p = |u(a)| + \Big( \int_a^b |u'(s)|^p\,ds
\Big)^{1/p} \,,\quad 1 \le p < \infty \,.
\]
\begin{example}\label{mfu.notbou} {\rm
Consider $u:[0,1]\to\real$ defined by $u(s) = 1 - s$. We have $\varphi(u) = 1$
and $M(u) = \{0\}$. Define $h_\lambda:[0,1]\to\real$ for $\lambda > 0$ by
\begin{equation}\label{mfu.notbou.1}
h_\lambda(s) = \begin{cases} 2s \,,& s\le \lambda \,, \\ 2\lambda \,,& s > \lambda \,.
\end{cases}
\end{equation}
Then the function $u + h_\lambda$ attains its maximum at $s = \lambda$, and
\[
\|h_\lambda\|_{1,1} = 2\lambda \,,\quad \varphi(u + h_\lambda) = 1 + \lambda
\,,\quad \varphi'(u;h_\lambda) = \max_{s\in M(u)} h_\lambda(s) = h_\lambda(0) = 0 \,.
\]
Consequently, $\|h_\lambda\|_{1,1} \to 0$ but
\begin{equation}\label{mfu.notbou.2}
\frac{|\varphi(u + h_\lambda) - \varphi(u) - \varphi'(u;h_\lambda)|}%
{\|h_\lambda\|_{1,1}} = \frac{\lambda}{2\lambda} = \frac{1}{2} \,.
\end{equation}
Thus, $\varphi$ is not Bouligand differentiable at $u$ on
$X = W^{1,1}(a,b)$.
Moreover, setting $\Phi = (\partial\varphi)|X$ we obtain
\[
M(u+h_\lambda) = \{\lambda\} \,,\quad \Phi(u+h_\lambda) = \{\delta_\lambda\} \,,\quad
\Phi(u+h_\lambda)h_\lambda = h_\lambda(\lambda) = 2\lambda \,,
\]
so
\begin{equation}\label{mfu.notbou.3}
\frac{|\varphi(u + h_\lambda) - \varphi(u) - \Phi(u+h_\lambda)h_\lambda|}%
{\|h_\lambda\|_{1,1}} = \frac{\lambda}{2\lambda} = \frac{1}{2} \,.
\end{equation}
Thus, $\Phi$ is not a Newton derivative of $\varphi$ on $W^{1,1}(0,1)$.
As $\|h_\lambda\|_\infty = \|h_\lambda\|_{1,1}$ (or due to the
embedding $W^{1,1}\to C$), the same is true on $C[0,1]$.
\hfill $\Box$
}
\end{example}
We will show that $\Phi$ is a Newton derivative of $\varphi$ on
$C^{0,\alpha}[a,b]$ for every $\alpha > 0$, endowed with the norm
\begin{equation}\label{mfu.hoenorm.1}
\|u\|_{C^{0,\alpha}} = |u(a)| + |u|_{C^{0,\alpha}} \,,\quad
|u|_{C^{0,\alpha}} =
\sup_{t,s\in [a,b] \atop s\neq t} \frac{|u(t) - u(s)|}{|t-s|} \,.
\end{equation}
We set $B_\veps = (-\veps,\veps)$.
\begin{lemma}\label{bmf.mch}
The mapping $M:C[a,b]\toto [a,b]$ is upper semicontinuous, that is,
for every $u\in C[a,b]$ and every $\veps > 0$ there exists $\delta > 0$
such that for every $h\in C[a,b]$
\begin{equation}\label{bmf.mch.1}
\|h\|_\infty < \delta \quad\Rightarrow\quad M(u+h) \subset  M(u) + B_\veps \,.
\end{equation}
\end{lemma}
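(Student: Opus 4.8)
The plan is to combine the two preceding lemmas directly; no further work is needed. Fix $u\in C[a,b]$ and $\veps > 0$.

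First I would invoke Lemma \ref{bmf.mc} to obtain some $\delta_0 > 0$ with $M_{\delta_0}(u) \subset M(u) + B_\veps$. Then I would set $\delta = \delta_0/2$ and apply Lemma \ref{bmf.hsmall}: for any $h\in C[a,b]$ with $\|h\|_\infty \le \delta$ we get $M(u+h) \subset M_{2\delta}(u) = M_{\delta_0}(u)$. Chaining the two inclusions yields $M(u+h) \subset M(u) + B_\veps$, which is exactly \eqref{bmf.mch.1}.

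There is no real obstacle here — the content of the upper semicontinuity has already been isolated into Lemmas \ref{bmf.mc} (a compactness/continuity argument on the sublevel sets of $u$) and \ref{bmf.hsmall} (the elementary estimate that a maximizer of $u+h$ nearly maximizes $u$). The only thing to be careful about is the factor of $2$ coming from the $2\delta$ in \eqref{bmf.hsmall.1}, which is handled by halving the $\delta_0$ supplied by Lemma \ref{bmf.mc}.

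\begin{proof}
Let $u\in C[a,b]$ and $\veps > 0$. By Lemma \ref{bmf.mc} there exists
$\delta_0 > 0$ such that $M_{\delta_0}(u) \subset M(u) + B_\veps$. Set
$\delta = \delta_0/2$. If $h\in C[a,b]$ satisfies $\|h\|_\infty \le \delta$,
then Lemma \ref{bmf.hsmall} gives $M(u+h) \subset M_{2\delta}(u) = M_{\delta_0}(u)$,
hence $M(u+h) \subset M(u) + B_\veps$.
\end{proof}
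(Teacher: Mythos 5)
Your proof is correct and follows exactly the route the paper intends: combine Lemma \ref{bmf.mc} with Lemma \ref{bmf.hsmall}, halving the $\delta$ to absorb the factor of $2$ in \eqref{bmf.hsmall.1}. The paper states this as an immediate consequence without spelling out the bookkeeping, which you have done cleanly.
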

\begin{proof}
By contradiction. Assume that $u\in C[a,b]$ and $\veps > 0$ are such that
for all $n\in\mathbb{N}$ there exist $h_n\in C[a,b]$ with
$\|h_n\|_\infty < \frac{1}{n}$ and $M(u+h_n) \not\subset M(u) + B_\veps$.
Let $t_n\in M(u+h_n)$ with $d(t_n,M(u)) \ge \veps$. Passing to a
subsequence we get $t_n\to t\in [a,b]$, $t\notin M(u)$.
On the other hand, $u(t_n) + h_n(t_n) = \varphi(u+h_n)$.
Letting $n\to\infty$ yields $u(t) = \varphi(u)$, so $t\in M(u)$,
a contradiction.
\end{proof}
\par
For a function $f:I\to\real$, $I$ being an interval, we denote its oscillation
on $I$ by
\begin{equation}\label{bmf.osc.1}
\osc_I(f) =  \sup \{ |f(t) - f(s)|: t,s\in I \} \,,
\end{equation}
and its modulus of continuity by
\begin{equation}\label{bmf.4}
\omega_I (f;\veps) = \sup \{ |f(t) - f(s)|: t,s\in I \,,\, |t-s|\le\veps\} \,.
\end{equation}
When $I = [a,b]$, we simply write $\osc(f)$ and $\omega(f;\veps)$.
\begin{lemma}\label{nmf.estbelow}
Let $u,h\in C[a,b]$, $\mu\in \partial\varphi(u+h)$. Then
\begin{equation}\label{nmf.estbelow.1}
\varphi'(u;h) \le \varphi(u+h) - \varphi(u) \le \langle \mu , h \rangle \,.
\end{equation}
Let moreover be $\veps > 0$ such that
\begin{equation}\label{nmf.estbelow.11}
M(u+h) \subset M(u) + B_\veps \,.
\end{equation}
Then we have
\begin{equation}\label{nmf.estbelow.2}
\langle \mu , h \rangle - \varphi'(u;h) \le \sup_{|s-r|\le \veps} |h(r) - h(s)|
= \omega(h;\veps) \,.
\end{equation}
\end{lemma}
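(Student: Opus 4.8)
The plan is to establish the three inequalities in \eqref{nmf.estbelow.1} and \eqref{nmf.estbelow.2} by elementary estimates, exploiting the variational characterization of $\varphi$ together with the support condition on $\mu$. First I would prove the middle inequality $\varphi(u+h)-\varphi(u)\le\langle\mu,h\rangle$. Pick $\tau\in M(u+h)$, so that $\varphi(u+h)=u(\tau)+h(\tau)$. Since $\mu\ge 0$ and $\|\mu\|=1$, and since $u(t)\le\varphi(u)$ for all $t\in[a,b]$, integrating $u+h$ against $\mu$ gives
\[
\langle\mu,u+h\rangle = \int_{[a,b]}(u+h)\,d\mu \le \varphi(u) + \langle\mu,h\rangle.
\]
On the other hand, $\mu$ is supported on $M(u+h)$, where $u+h$ is identically equal to $\varphi(u+h)$, so $\langle\mu,u+h\rangle=\varphi(u+h)$. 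Combining the two yields $\varphi(u+h)-\varphi(u)\le\langle\mu,h\rangle$.

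Next I would handle the left inequality $\varphi'(u;h)\le\varphi(u+h)-\varphi(u)$. Using the formula \eqref{mfu.2}, $\varphi'(u;h)=\max_{s\in M(u)}h(s)$; let $s_0\in M(u)$ attain this maximum, so $u(s_0)=\varphi(u)$. Then
\[
\varphi(u+h) \ge u(s_0)+h(s_0) = \varphi(u) + \varphi'(u;h),
\]
which is exactly the desired inequality. (Alternatively this follows from convexity of $\varphi$, since $\varphi'(u;h)\le\varphi(u+h)-\varphi(u)$ for any convex Lipschitz functional.)

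For the final estimate \eqref{nmf.estbelow.2}, I would use the support condition \eqref{nmf.estbelow.11}: since $\mathrm{supp}(\mu)\subset M(u+h)\subset M(u)+B_\veps$, for $\mu$-almost every $r$ there exists $s\in M(u)$ with $|r-s|\le\veps$, hence $h(r)\le h(s)+\omega_{[a,b]}(h;\veps)\le\varphi'(u;h)+\omega_{[a,b]}(h;\veps)$, where the last step again uses $\varphi'(u;h)=\max_{s\in M(u)}h(s)$. Integrating this pointwise bound against the probability measure $\mu$ gives $\langle\mu,h\rangle\le\varphi'(u;h)+\omega_{[a,b]}(h;\veps)$, which rearranges to \eqref{nmf.estbelow.2}. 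The one point requiring a little care is that $r\mapsto d(r,M(u))$ need not be defined pointwise on all of $\mathrm{supp}(\mu)$ in a measurable-selection-free way; but since $M(u)$ is compact, for each $r$ with $d(r,M(u))\le\veps$ one simply picks the nearest point of $M(u)$, and the bound $h(r)-\varphi'(u;h)\le\omega_{[a,b]}(h;\veps)$ holds for every such $r$, so no selection is needed — the inequality is uniform in $r\in\mathrm{supp}(\mu)$. I do not anticipate a serious obstacle here; the only mild subtlety is keeping the roles of $u$ and $u+h$ straight (the measure lives on the maximizer set of $u+h$, but the comparison $h(r)\le\varphi'(u;h)+\omega$ is driven by proximity to $M(u)$).
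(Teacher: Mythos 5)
Your proof is correct and follows essentially the same route as the paper: convexity (or the direct evaluation at a maximizer of $u$) for the first inequality, the support condition together with $\mu\ge 0$, $\|\mu\|=1$ for the second, and the inclusion $\mathrm{supp}(\mu)\subset M(u)+B_\veps$ plus the modulus of continuity, integrated against $\mu$, for the third. The only cosmetic difference is that for the middle inequality the paper first derives the pointwise bound $\varphi(u+h)-\varphi(u)\le h(r)$ for $r\in\mathrm{supp}(\mu)$ and then integrates, whereas you integrate $u+h$ directly; the two are equivalent.
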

\begin{proof}
The first inequality in (\ref{nmf.estbelow.1}) holds since $\varphi$ is convex;
as $\varphi(u) - \varphi(u+h)\ge \langle \mu, -h\rangle$, the second
inequality follows.
Now assume that (\ref{nmf.estbelow.11}) holds. Recalling (\ref{mfu.5}),
given $r\in{\rm supp}(\mu) \subset M(u+h)$ we find an $s_r\in M(u)$
with $|r - s_r| < \veps$, so
\[
h(r) - \varphi'(u;h) = h(r) - \max_{s\in M(u)} h(s) \le h(r) - h(s_r) \le
\omega(h;\veps) \,.
\]
Integrating both sides of this inequality over $r\in [a,b]$ with respect to
$\mu$ yields (\ref{nmf.estbelow.2}).
\end{proof}

For the modulus of continuity, we have
\begin{equation}\label{nmf.mod.1}
\omega(h;\veps) \le |h|_{C^{0,\alpha}} \veps^\alpha \,,\quad
\omega(h;\veps) \le \|h'\|_{L^p} \veps^{1 - 1/p} \,.
\end{equation}
\begin{proposition}\label{nmf.nbder}
Let $X = C^{0,\alpha}[a,b]$ or $X = W^{1,p}(a,b)$,
with $0 < \alpha \le 1$ resp. $1 < p \le \infty$.
Then the set-valued mapping $\Phi = (\partial\varphi)|X$ given in
(\ref{mfu.5}) is a globally bounded Newton derivative of the
maximum functional $\varphi$ on $X$.
In particular, for every $u\in X$ there exists a nondecreasing and bounded
$\rho_u: \real_+\to\real_+$ such that $\rho_u(\delta)\to 0$ as $\delta\to 0$,
$\rho_u$ is bounded independently from $u$, and
\begin{equation}\label{nmf.nbder.1}
|\varphi(u+h) - \varphi(u) - Lh| \le
\begin{cases}\rho_u(\|h\|_\infty)|h|_{C^{0,\alpha}} \\
\rho_u(\|h\|_\infty)\|h'\|_{L^p}
\end{cases}
\end{equation}
respectively, for every $h\in X$ and every $L\in \Phi(u+h)$.

Moreover, $\varphi$ is Bouligand differentiable on $X$, and for every $u\in X$
\begin{equation}\label{nmf.nbder.2}
|\varphi(u+h) - \varphi(u) - \varphi'(u;h)| \le
\begin{cases}\rho_u(\|h\|_\infty)) |h|_{C^{0,\alpha}} \\ \rho_u(\|h\|_\infty)\|h'\|_{L^p}
\end{cases}
\end{equation}
respectively, for every $h\in X$.
\end{proposition}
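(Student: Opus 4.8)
The plan is to establish the Newton derivative property first and then derive Bouligand differentiability as a consequence. The core observation is that all the ingredients are already assembled in the preceding lemmas; what remains is to combine them carefully and check that the error bounds behave well as $h\to 0$ in the relevant norm.

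First I would fix $u\in X$ and recall from Lemma~\ref{nmf.estbelow} that for every $h\in C[a,b]$ and every $\mu\in\Phi(u+h)$ one has $0\le\langle\mu,h\rangle-\varphi'(u;h)$ always, and, importantly, $|\varphi(u+h)-\varphi(u)-\langle\mu,h\rangle|\le\langle\mu,h\rangle-\varphi'(u;h)$ because $\varphi'(u;h)\le\varphi(u+h)-\varphi(u)\le\langle\mu,h\rangle$; that is, the Newton residual is squeezed between $0$ and $\langle\mu,h\rangle-\varphi'(u;h)$. Hence the whole matter reduces to estimating $\langle\mu,h\rangle-\varphi'(u;h)$, which is exactly what Lemma~\ref{bmf.est} does. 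For the $C^{0,\alpha}$ case: given the target accuracy, choose $\veps>0$ via upper semicontinuity of $M$ (Lemma~\ref{bmf.mch}) so that $\|h\|_\infty<\delta$ forces $M(u+h)\subset M(u)+B_\veps$, and then (\ref{bmf.est.1}) gives the bound $\|h\|_{C^{0,\alpha}}\veps^\alpha$. The function $\veps=\veps(\delta)$ produced by Lemma~\ref{bmf.mch} can be taken nondecreasing and tending to $0$ as $\delta\to0$; setting $\rho_u(\delta)=(\veps(\delta))^\alpha$ (and extending it to be nondecreasing and bounded, e.g.\ capping at some value on $[\delta_0,\infty)$) yields (\ref{nmf.nbder.1}). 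Dividing by $\|h\|_X\ge\|h\|_{C^{0,\alpha}}$-type quantities and using $\|h\|_\infty\le C\|h\|_X$ — so that $\rho_u(\|h\|_\infty)\to0$ as $\|h\|_X\to0$ — gives (\ref{nwd.ndef.0}), and since $\Phi$ is set-valued the supremum over $\mu\in\Phi(u+h)$ is controlled uniformly by the same bound, establishing the set-valued Newton property (\ref{nwd.ndef.1}). Global boundedness is immediate: every $\mu\in\Phi(v)$ has $\|\mu\|=1$, so as an element of $\mathcal{L}(X,\real)$ its operator norm is bounded by the embedding constant of $X$ into $C[a,b]$, uniformly in $v$. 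The $W^{1,p}$ case is identical, using (\ref{bmf.est.2}) and $\veps^{1/p'}$ in place of $\veps^\alpha$; the endpoint $p=\infty$ is handled by noting $W^{1,\infty}(a,b)=C^{0,1}[a,b]$ with equivalent norms, so it is already covered by the Hölder case $\alpha=1$.

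For the Bouligand statement I would invoke Proposition~\ref{nwd.nbou}: we have just produced a single-valued Newton derivative $D^N\varphi(u)$ by selecting any $\mu_u\in\Phi(u)$, but it is cleaner to argue directly. Replacing $h$ by $\lambda h$ with $\|h\|_X=1$ in (\ref{nmf.nbder.1}): since $\varphi'(u;\cdot)$ is positively homogeneous, $|\varphi(u+\lambda h)-\varphi(u)-\lambda\varphi'(u;h)|$ is squeezed by $0\le\langle\mu,\lambda h\rangle-\varphi'(u;\lambda h)=\lambda(\langle\mu,h\rangle-\varphi'(u;h))\le\lambda\rho_u(\lambda\|h\|_\infty)\|h\|_X$, so dividing by $\lambda$ gives a bound $\rho_u(C\lambda)\to0$ uniformly in $\|h\|_X=1$; this is precisely condition (\ref{nwd.wdef.3}) after rescaling, hence $\varphi$ is Bouligand differentiable with derivative $\varphi'(u;h)=\max_{s\in M(u)}h(s)$, and (\ref{nmf.nbder.2}) holds with the same $\rho_u$.

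The main obstacle, such as it is, is bookkeeping rather than conceptual: one must be careful that the function $\delta\mapsto\veps(\delta)$ from Lemma~\ref{bmf.mch} can genuinely be chosen monotone with $\veps(\delta)\to0$, and that after composing with $t\mapsto t^\alpha$ (resp.\ $t\mapsto t^{1/p'}$) and truncating one still has a single modulus-type function $\rho_u$ valid for \emph{all} $h\in X$, not just small ones — the truncation to a bounded nondecreasing function handles large $h$ trivially since the left-hand sides of (\ref{nmf.nbder.1})--(\ref{nmf.nbder.2}) are controlled by $2\|h\|_\infty\le 2C\|h\|_X$ via the $1$-Lipschitz continuity of $\varphi$ and boundedness of $\varphi'(u;\cdot)$. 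The only genuinely quantitative input, the Hölder/Sobolev oscillation estimate $|h(r)-h(s)|\le\|h\|_{C^{0,\alpha}}|r-s|^\alpha$ resp.\ $\le\|h'\|_{L^p}|r-s|^{1/p'}$, is already proved in Lemma~\ref{bmf.est}, so nothing deep remains.
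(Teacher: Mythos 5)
Your proof is correct and follows essentially the same route as the paper's: squeeze the Newton (and Bouligand) residual between $0$ and $\langle\mu,h\rangle-\varphi'(u;h)$ via Lemma~\ref{nmf.estbelow}, control that quantity by $\|h\|_{C^{0,\alpha}}\veps^\alpha$ (resp.\ $\|h'\|_{L^p}\veps^{1/p'}$) using Lemma~\ref{bmf.est}, and define $\rho_u$ from the optimal $\veps$ for each $\delta=\|h\|_\infty$, with global boundedness from $\|\mu\|=1$ and the embedding $X\hookrightarrow C[a,b]$. The extra bookkeeping you mention (monotonicity and truncation of $\rho_u$, the $p=\infty$ endpoint via $W^{1,\infty}=C^{0,1}$, the $\lambda$-rescaling for (\ref{nwd.wdef.3})) is sound but not needed beyond what the paper tacitly assumes.
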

\begin{proof}
We consider the case $X = C^{0,\alpha}[a,b]$.
Let $u\in X$ be given, let
\[
\veps_u(\delta) = \inf\{\veps: M(u+B_\delta)\subset M(u) + B_\veps\}
\]
for $\delta > 0$. Then $\veps_u$ is increasing.
As $M$ is upper semicontinous by Lemma \ref{bmf.mch},
we have $0 < \veps_u(\delta) \to 0$ as $\delta \to 0$,
According to (\ref{nmf.estbelow.1}) and (\ref{nmf.estbelow.2}),
for $h\in X$ and $L = \mu\in\Phi(u+h)$ we get
\[
|\varphi(u+h) - \varphi(u) - Lh| \le \omega(h;\veps_u(\|h\|_\infty)
\le \veps_u(\|h\|_\infty)^\alpha\cdot |h|_{C^{0,\alpha}} \,.
\]
Setting $\rho_u(\delta) = \veps_u(\delta)^\alpha$, (\ref{nmf.nbder.1})
follows for the H\"older case. Since $\|L\|_{C\to\real} = 1$, we have
$\|L\|_{C^{0,\alpha}\to\real} \le c_\alpha$ and
\[
|\varphi(u+h) - \varphi(u) - Lh| \le 2\|h\|_\infty \le 2c_\alpha \|h\|_\infty \,,
\]
where $c_\alpha$ denotes the norm of the embedding $C^{0,\alpha}\to C$.
Thus, $c_\alpha$ is a global bound for $\Phi$,
and $2c_\alpha$ furnishes a global bound for $\rho_u$.

The proof for the case $X =  W^{1,p}(a,b)$ is analogous. (One might also
refer to Morrey's embedding theorem which implies that 
$W^{1,p}(a,b)$ is continuously embedded into $C^{0,\alpha}[a,b]$ for
$\alpha \le 1 - 1/p$.)
\end{proof}
\par
Note that the estimates (\ref{nmf.nbder.1}) and (\ref{nmf.nbder.2})
are slightly stronger than required for Newton and Bouligand differentiability
(the factor $\rho_u(\|h\|_X)$ instead of $\rho_u(\|h\|_\infty)$, as well as
the norms instead of the seminorms, would suffice).
This strenghtening is motivated by applications to partial differential equations.

\section{The cumulated maximum}

We define the cumulated maximum of a function $u\in C[a,b]$ as
\begin{equation}\label{am.1}
\varphi_t(u) = \max_{s\in [a,t]} u(s) \,,\quad t\in [a,b] \,.
\end{equation}
Setting
\begin{equation}\label{am.2}
(Fu)(t) = \varphi_t(u)
\end{equation}
we obtain an operator
\begin{equation}\label{am.3}
F: C[a,b] \to C[a,b] \,.
\end{equation}
The function $Fu$ is nondecreasing for every $u\in C[a,b]$. Since
\[
|\varphi_t(u) - \varphi_t(v)| \le \max_{s\in [a,t]} |u(s) - v(s)| \,,
\quad \text{for all $u,v\in C[a,b]$,}
\]
we have
\begin{equation}\label{am.4}
\|Fu - Fv\|_{\infty,t} \le \|u - v\|_{\infty,t} \,,\quad
\text{for all $u,v\in C[a,b]$, $t\in [a,b]$.}
\end{equation}
Here and in the following we use the notation
\begin{equation}\label{am.41}
\|u\|_{\infty,t} = \sup_{s\le t} |u(s)| \,.
\end{equation}
For any fixed $t\in [a,b]$, the directional derivative of
$\varphi_t:C[a,b]\to\real$ given in (\ref{mfu.2}) yields that,
for all $u,h\in C[a,b]$,
\begin{equation}\label{am.pwd.1}
F^{PD}(u;h)(t) :=
\lim_{\lambda\downarrow 0} \frac{(F(u+\lambda h))(t) - (Fu)(t)}{\lambda}
= \varphi_t'(u;h) = \max_{s\in M(u,t)} h(s) \,,
\end{equation}
where
\begin{equation}\label{am.pwd.2}
M(u,t) = \{\tau: \tau\in [a,t],\,u(\tau) = \varphi_t(u)\}
\end{equation}
is the set where $u$ attains its maximum on $[a,t]$. As in \cite{BK},
we call \textbf{pointwise directional derivative of {\boldmath $F$}}
the function $F^{PD}(u;h):[a,b]\to \real$ obtained in this manner.

Example 4.3 in \cite{BK} shows that
the function $F^{PD}(u;h):[a,b]\to\real$ does not need to be continuous
even though $u$ and $h$ are; so $F:C[a,b]\to C[a,b]$ is not
directionally differentiable. When this happens, the difference
quotients
\[
\frac{F(u+\lambda h) - Fu}{\lambda}
\]
do not converge uniformly to $F^{PD}(u;h)$. They do, on the other hand,
converge in $L^r(a,b)$ for every $r < \infty$, as they are uniformly bounded
by $\|h\|_\infty$. As a consequence, $F:C[a,b]\to L^r(a,b)$ is
Hadamard differentiable (\cite{BK}). In order to obtain Bouligand or Newton
differentiability, as in the case of the maximum functional one
has to strengthen the norm in the domain space. Indeed, the functions
from Example \ref{mfu.notbou} can be used to show that $F$ is not
Bouligand differentiable on $C[a,b]$.

\textbf{Bouligand differentiability of the cumulated maximum.}
Let again $X$ stand for $C^{0,\alpha}[a,b]$ with $0 < \alpha\le 1$,
or for $W^{1,p}(a,b)$ with $1 < p \le \infty$.
We want to prove that $F:X\to L^q(a,b)$ is Bouligand
differentiable for $1\le q < \infty$ with the improved
remainder estimate as in Proposition \ref{nmf.nbder}. For this, we
have to show that
\begin{equation}\label{am.bourem.1}
\rho_u^F(\delta) := \sup_{\|h\|_\infty \le \delta}
\frac{\|F(u+h) - F(u) - F'(u;h)\|_{L^q}}{\|h\|_{X}} \to 0
\qquad \text{as $\delta\to 0$.}
\end{equation}

\begin{proposition}\label{am.bounew}
The cumulated maximum $F:X\to L^q(a,b)$ is Bouligand
differentiable for every $q < \infty$, and $F' = F^{PD}$.
Moreover,
\begin{equation}\label{am.bounew.1}
\|F(u+h) - F(u) - F'(u;h)\|_{L^q} \le
\rho_u^F(\|h\|_\infty)\cdot \|h\|_{X} \,,
\end{equation}
and $\rho_u^F(\delta) \to 0$ as $\delta \to 0$.
In addition, $\rho_u^F$ is bounded uniformly in $u$.
\end{proposition}

\begin{proof}
Assume that (\ref{am.bourem.1}) does not hold. Then there exists $\veps > 0$
and a sequence $\{h_n\}$ in $X$ with $\|h_n\|_\infty \to 0$
and
\begin{equation}\label{am.bounew.2}
\veps \|h_n\|_{X} \le \|F(u+h_n) - F(u) - F^{PD}(u;h_n)\|_{L^q}
= \left( \int_a^b d_n(t)^q\,dt \right)^{1/q} \,,
\end{equation}
where
\[
d_n(t) = |\varphi_t(u+h_n) - \varphi_t(u) - \varphi_t'(u;h_n)| \,.
\]
Setting $\rho_n = d_n/\|h_n\|_{X}$ we have $\rho_n(t)\to 0$ pointwise,
because $\varphi_t:X\to\real$ is Bouligand differentiable for every $t$
by Proposition \ref{nmf.nbder}, with the remainder estimate (\ref{nmf.nbder.2}).
Since moreover $\{\rho_n\}$ is uniformly bounded,
by dominated convergence $\|\rho_n\|_{L^q}\to 0$ which contradicts (\ref{am.bounew.2}).
Therefore $F$ is Bouligand differentiable and $F' = F^{PD}$.
The global bound on $\rho_u^F$ follows from the estimate
$\|F(u+h) - F(u) - F'(u;h)\|_{\infty} \le 2\|h\|_\infty$ combined with the
embedding constants.
\end{proof}

\textbf{Newton differentiability of the cumulated maximum.}
A Newton derivative of the cumulated maximum is constructed from the
Newton derivative of the maximum functional given in the previous section.
Its elements $L$ will have the form $(Lh)(t) = \langle \mu^t,h\rangle$,
where $\mu^t$ belongs to the Newton derivative $\Phi^t$ of $\varphi_t$.
In order that $Lh$ becomes a measurable function, the measures $\mu^t$
are constructed from measurable selectors of the family $\{\Phi^t\}$.

We first analyze the mapping $M:C[a,b]\times [a,b]\toto [a,b]$
\begin{equation}\label{mam.mtu.1}
M(u,t) = \{\tau: \tau\in [a,t],\, u(\tau) = \varphi_t(u)\} \,.
\end{equation}
The sets $M(u,t)$ are compact nonempty subsets of $[a,b]$,
and $M(u,a) = \{a\}$.

\begin{lemma}\label{mam.mtusc}
The set-valued mapping $M$ is upper semicontinuous and measurable.
\end{lemma}

\begin{proof}
To prove that $M$ is upper semicontinuous according to Definition \ref{svm.uscdef},
let $A\subset [a,b]$ be closed, and let $(u_n,t_n)$ be a sequence in $M^{-1}(A)$
with $u_n\to u\in C[a,b]$ and $t_n \to t\in [a,b]$. In order to show that
$(u,t)\in M^{-1}(A)$, let $\tau_n\in A$ such that $\tau_n\in M(u_n,t_n)$,
thus $u_n(\tau_n) = \varphi_{t_n}(u_n)$. Passing to a subsequence we have
$\tau_n\to \tau\in A$ since $A$ is closed. Moreover,
$\tau\le t$, $u_n(\tau_n)\to u(\tau)$ and
\[
\varphi_{t_n}(u_n) = (\varphi_{t_n}(u_n) - \varphi_{t_n}(u)) + \varphi_{t_n}(u)
\to \varphi_t(u)
\]
by (\ref{am.4}) and
since $t\mapsto \varphi_t(u)$ is continuous. Therefore $u(\tau) = \varphi_t(u)$
and $\tau\in M(u,t)$. Thus $M$ is upper semicontinuous. It now follows from
Proposition 6.2.3 in \cite{PK} that $M$ is measurable.
\end{proof}

The set-valued mapping $M$ possesses a dense sequence of measurable selectors.
\begin{proposition}\label{mam.mtsel}
There exists a sequence $\{f_n\}$ of measurable selectors of $M$ such that
\begin{equation}\label{mam.mtsel.1}
M(u,t) = \overline{\{f_n(u,t): n\in\mathbb{N}\}} \,,\quad
\text{for all $u\in C[a,b]$, $t\in [a,b]$} \,.
\end{equation}
In particular $\max M(u,t) = \sup_n f_n(u,t)$ and $\min M(u,t) = \inf_n f_n(u,t)$
are measurable selectors of $M$.
\end{proposition}

\begin{proof}
This is a consequence of Theorem 6.3.18 in \cite{PK}, as $[a,b]$ is a complete
separable metric space.
\end{proof}

We consider the mapping $\Phi: C[a,b]\times [a,b]\toto C[a,b]^*$,
\begin{equation}\label{mam.phit.1}
\Phi(u,t) = \{\nu\in C[a,b]^*: {\rm supp}(\nu)\subset M(u,t), \nu\ge 0, \|\nu\|=1\} \,.
\end{equation}
The following facts are well known. The closed unit ball
$K$ in $C[a,b]^*$, endowed with the weak star topology, is compact (hence
complete), metrizable and separable. The sets $\Phi(u,t)$ are nonempty
convex and weak star compact subsets of $K$
(note that for $\nu\ge 0$ we have $\|\nu\| = \langle \nu, 1\rangle$).
Moreover,
\begin{gather}\label{mam.phit.2}
\Phi(u,a) = \{\delta_a\} \,,
\\ \label{mam.phit.3}
M(u+c,t) = M(u,t) \,,\quad \Phi(u+c,t) = \Phi(u,t) \quad
\text{for all $c\in\real$,}
\\ \label{mam.phit.4}
(\Phi(u,t))(c) = \{c\} \quad \text{for all $c\in\real$.}
\end{gather}
\begin{lemma}\label{mam.suppcl}
Let $\{u_n\}$, $\{t_n\}$, $\{\nu_n\}$ be sequences in $C[a,b]$, $[a,b]$
and $C[a,b]^*$ respectively, with $u_n\to u$, $t_n\to t$ and
$\nu_n \wsto \nu$, let ${\rm supp}(\nu_n)\subset M(u_n,t_n)$
for all $n\in\mathbb{N}$.
Then ${\rm supp}(\nu)\subset M(u,t)$.
\end{lemma}

\begin{proof}
Let $f\in C_0^\infty(\real\setminus M(u,t))$. We have to show that
$\langle \nu,f\rangle = 0$. Let
\[
\veps = \inf\{|s-\tau|: s\in {\rm supp}(f), \tau\in M(u,t)\} \,.
\]
We have $\veps > 0$ because the sets ${\rm supp}(f)$ and $M(u,t)$
are disjoint and compact. Since $M$ is upper semicontinuous by
Proposition \ref{mam.mtusc}, we may choose $N\in\mathbb{N}$ such that
$M(u_n,t_n) \subset M(u,t) + B_{\veps/2}$ holds for all $n \ge N$.
Then ${\rm supp}(f)\cap M(u_n,t_n) = \emptyset$ and thus
$\langle \nu_n,f\rangle = 0$ for all $n\ge N$. Passing to the limit
$n\to\infty$ we arrive at $\langle \nu,f\rangle = 0$.
\end{proof}

\begin{proposition}\label{mam.phitusc}
The mapping $\Phi: C[a,b]\times [a,b]\toto C[a,b]^*$ defined in
(\ref{mam.phit.1}) is upper semicontinuous, thus measurable.
\end{proposition}

\begin{proof}
Let $A\subset C[a,b]^*$ be weak star closed. We have to show that
$\Phi^{-1}(A)$ is closed. To this end, let $\{(u_n,t_n)\}$ be a
sequence in $\Phi^{-1}(A)$ with $u_n\to u$ in $C[a,b]$ and $t_n\to t$
in $[a,b]$. Let $\nu_n\in \Phi(u_n,t_n)$, so $\nu_n\in A$ as well
as $\nu_n\ge 0$, $\|\nu_n\| = 1$ and ${\rm supp}(\nu_n)\subset M(u_n,t_n)$
for all $n\in\mathbb{N}$. For some subsequence, we have
$\nu_{n_k}\wsto \nu$ with $\nu\ge 0$, $\|\nu\| = 1$ and $\nu\in A$. By
Proposition \ref{mam.suppcl}, ${\rm supp}(\nu)\subset M(u,t)$.
Thus, $(u,t)\in\Phi^{-1}(A)$ and the proof is complete.
\end{proof}

\begin{proposition}\label{mam.phitsel}
There exists a sequence $\{\mu_n\}$ of measurable selectors of $\Phi$ such
that
\begin{equation}\label{mam.phitsel.1}
\Phi(u,t) = \overline{\{\mu_n(u,t): n\in\mathbb{N}\}} \,,\quad
\text{for all $u\in C[a,b]$, $t\in [a,b]$} \,,
\end{equation}
the closure being taken w.r.t. the weak star topology.
\end{proposition}

\begin{proof}
This follows from Theorem 6.3.18 in \cite{PK}, as the unit
ball in $C[a,b]^*$ is a complete separable metrizable space w.r.t. the
weak star topology.
\end{proof}

\begin{lemma}\label{mam.l}
Let $\mu$ be a measurable selector of $\Phi$. Then
\[
(Lh)(t) = \langle \mu(u,t),h \rangle
\]
defines an element $L\in \mathcal{L}(C[a,b];L^\infty(a,b))$ with
$\|L\| = 1$ and
\begin{equation}\label{mam.l.1}
\|Lh\|_{\infty,t} \le \|h\|_{\infty,t} \,.
\end{equation}
\end{lemma}

\begin{proof}
For every $u,h\in C[a,b]$, the mapping $t\mapsto \langle \mu(u,t),h \rangle$
is measurable and satisfies
$|\langle \mu(u,s),h \rangle| \le \|h\|_{\infty,t}$ for all $s\le t$,
since $\mu(u,s)$ has support in $[a,s]$.
Thus, $L$ is well-defined, $\|L\|\le 1$ and (\ref{mam.l.1}) holds. 
As $\mu\ge 0$ and $L(1) = 1$, we have $\|L\| = 1$.
\end{proof}
\par
Let $X$ again denote any one of the spaces $C^{0,\alpha}[a,b]$ for
$0 < \alpha \le 1$ or $W^{1,p}(a,b)$ for $1 < p\le\infty$.
\begin{proposition}\label{am.newder}
Let $S_\Phi$ be the set of all measurable selectors of $\Phi$,
let $q\in [1,\infty)$.
The set-valued mapping $G:X\toto \mathcal{L}(X,L^q(a,b))$,
\begin{equation}\label{am.newder.1}
G(u) = \{L: \text{$(Lh)(t) = \langle \mu(u,t),h \rangle$, $\mu\in S_\Phi$}\}
\end{equation}
defines a Newton derivative of the cumulated maximum
$F:X\to L^q(a,b)$ with
\begin{equation}\label{am.newder.2}
\|F(u+h) - F(u) - Lh\|_{L^q} \le \rho_u^G(\|h\|_\infty) \cdot\|h\|_{X} \,,
\end{equation}
for all $L\in G(u+h)$. Here, $\rho_u^G:\real_+\to\real_+$ is an increasing
function with $\rho_u^G(\delta)\to 0$ as $\delta\to 0$, bounded independently
from $u$.
\end{proposition}

\begin{proof}
Fix $u\in X$. For $h\in  X$ we define
\[
d(h,t) = \sup_{\mu^t\in \Phi(u+h,t)}
|\varphi_t(u+h) - \varphi_t(u) - \langle \mu^t,h\rangle| \,.
\]
Let $\{\mu_k\}$ be a sequence of measurable selectors of $\Phi$ according to
Proposition \ref{mam.phitsel}, set
\[
d_k(h,t) = |\varphi_t(u+h) - \varphi_t(u) - \langle \mu_k(u+h,t),h\rangle|
\]
Then $d(h,t) = \sup_k d_k(h,t)$ by (\ref{mam.phitsel.1}), and therefore
the mapping $t\mapsto d(h,t)$ is measurable. Moreover,
\[
\sup_{L\in G(u+h)} \|F(u+h) - F(u) - Lh\|_{L^q} =
\left( \int_a^b d(h,t)^q\,dt \right)^{1/q} =: d^G(h) \,.
\]
The remainder of the proof is analogous to that of Proposition \ref{am.bounew}.
We define
\begin{equation}\label{am.newder.5}
\rho_u^G(\delta) = \sup_{\|h\|_\infty \le \delta} \frac{d^G(h)}{\|h\|_{X}} \,.
\end{equation}
Assume that $\lim_{\delta\to 0} \rho_u^G(\delta) = 0$ does not hold.
Then there exist $\veps > 0$ and a sequence $\{h_n\}$ in $X$
with $\|h_n\|_\infty \to 0$ and
\begin{equation}\label{am.newder.6}
\veps \|h_n\|_{X} \le\left( \int_a^b d(h_n,t)^q\,dt \right)^{1/q} \,.
\end{equation}
Since $\Phi(\cdot,t)$ is a Newton derivative of $\varphi_t$, we have
$\rho_n(t) = d(h_n,t)/\|h_n\|_X \to 0$ pointwise in $t$ as $n\to\infty$.
Moreover, $\rho_n$ is uniformly bounded. Applying dominated convergence,
we arrive at a contradiction to (\ref{am.newder.6}). The global boundedness
of $\rho_u^G$ follows from the estimate $\|F(u+h)-F(u)-Lh\|_\infty \le 2\|h\|_\infty$.
\end{proof}
\par
Proposition \ref{mam.mphisel} below shows that the set $S_\Phi$ is large enough
to approximate the whole range of $\Phi$.
\begin{lemma}\label{mam.mphi}
Let $f:C[a,b]\times [a,b]\to [a,b]$ be a measurable selector of $M$.
Then
\begin{equation}\label{mam.mphi.1}
\mu(u,t) = \delta_{f(u,t)}
\end{equation}
defines a measurable selector $\mu:C[a,b]\times [a,b]\to C[a,b]^*$
of $\Phi$.
\end{lemma}

\begin{proof}
For each $v\in C[a,b]$, the mapping $s\mapsto v(s) = \langle \delta_s,v\rangle$
is continuous from $[a,b]$ to $\real$. Thus, the mapping $s\mapsto \delta_s$
is weak star continuous from $[a,b]$ to $C[a,b]^*$, and consequently
(\ref{mam.mphisel.1}) defines a measurable mapping.
\end{proof}

\begin{proposition}\label{mam.mphisel}
Let $\{f_n\}$ be a sequence of measurable selectors of $M$ such
that
\begin{equation}\label{mam.mphisel.1}
M(u,t) = \overline{\{f_n(u,t): n\in\mathbb{N}\}} \,,\quad
\text{for all $u\in C[a,b]$, $t\in [a,b]$} \,.
\end{equation}
Taking all rational convex combinations of the mappings
$(u,t)\mapsto \delta_{f_n(u,t)}$ we obtain a sequence $\{\mu_n\}$
of measurable selectors of $\Phi$ such that
\begin{equation}\label{mam.mphisel.2}
\Phi(u,t) = \overline{\{\mu_n(u,t): n\in\mathbb{N}\}} \,,\quad
\text{for all $u\in C[a,b]$, $t\in [a,b]$} \,,
\end{equation}
the closure being taken w.r.t. the weak star topology.
\end{proposition}

\begin{proof}
Let $u\in C[a,b]$ and $t\in [a,b]$ be given.
The set $D = \{f_n(u,t): n\in\mathbb{N}\}$ is a countable dense
subset of $M(u,t)$.
The set of all convex combinations with rational coefficients
of elements of the set $\{\delta_\tau: \tau\in D\}$
then is dense in $\Phi(u,t)$ w.r.t. the weak star topology.
\end{proof}

\section{The chain rule}

In the following sections we will see that the play operator can be 
represented as a finite composition of cumulated maxima and positive
part mappings. The Newton differentiability of these mappings will
imply Newton differentiability of the play, by virtue of the chain
rule. It is a standard result that the chain rule is valid for Newton
derivatives, see Proposition A.1 in \cite{HK09} for the single-valued
and Proposition 3.8 in \cite{Ulb11} for the set-valued case.

As a result of investigating the maximum and the cumulated maximum,
we have seen above that these operators satisfy a slightly stronger version of
Newton and Bouligand differentiability. For the cumulated maximum
$F: X\to Y$ with $X = W^{1,p}(a,b)$ or $C^{0,\alpha}[a,b]$ and
$Y = L^r(a,b)$, we have constructed a Newton derivative
$G: X\toto \mathcal{L}(X;Y)$ with a remainder estimate
\begin{equation}\label{rr.1}
\sup_{L\in G(u+h)}
\|F(u+h) - F(u) - Lh\|_Y \le \rho_u(\|h\|_{\tilde{X}})\cdot \|h\|_X \,,
\end{equation}
where $\tilde{X} = C[a,b]$, endowed with the maximum norm. 
The purpose of this section is to extend the chain rule to this situation, 
for Newton as well as for Bouligand derivatives.

We consider the following setting. 
\begin{assumption}\label{rr.ass}\hfill\\
(i)
$X,Y,Z$ are normed spaces, $U\subset X$ and $V\subset Y$ are open.
$F_1:U\to Y$ and $F_2:V\to Z$ with $F_1(U)\subset V$ are locally
Lipschitz. 
\hfill\\ (ii)
$\tilde{X}$ and $\tilde{Y}$ are normed spaces with continuous
embeddings $X\subset \tilde{X}$ and $Y\subset \tilde{Y}$.
\hfill\\ (iii)
$G_1:U\toto\mathcal{L}(X;Y)$ and $G_2:V\toto\mathcal{L}(Y;Z)$ satisfy,
for every $u\in U$ and $v\in V$,
\begin{equation}\label{rr.ass.1}
\sup_{L_1\in G_1(u+h)}
\|F_1(u+h) - F_1(u) - L_1h\|_Y \le \rho_{1,u}(\|h\|_{\tilde{X}})\cdot \|h\|_X
\end{equation}
for every $h\in X$ with $u+h\in U$,
\begin{equation}\label{rr.ass.2}
\sup_{L_2\in G_2(v+k)}
\|F_2(v+k) - F_2(v) - L_2k\|_Z \le \rho_{2,v}(\|k\|_{\tilde{Y}})\cdot \|k\|_Y
\end{equation}
for every $k\in Y$ with $v+k\in V$, with functions
$\rho_{1,u},\rho_{2,v}:\real_+\to\real_+$ satisfying 
$\rho_{1,u}(\delta)\downarrow 0$ and $\rho_{2,v}(\delta)\downarrow 0$
for $\delta\downarrow 0$.
\hfill\\ (iv) 
$F_1:(U,\|\cdot\|_{\tilde{X}})\to (V,\|\cdot\|_{\tilde{Y}})$ is
continuous.
\hfill\\ (v)
$G_2$ is locally bounded on $(V,\|\cdot\|_Y)$.
\end{assumption}
Since $\rho_{1,u}(\|h\|_{\tilde{X}}) \le \rho_{1,u}(c\|h\|_{X})$ for some constant
$c$, part (iii) of the assumption implies that $G_1$ and $G_2$ are Newton derivatives 
for $F_1$ in U and $F_2$ in $V$, respectively.
Note also that the assumption ``$G_2$ locally bounded'' already implies that
$F_2$ is locally Lipschitz.

In the special case $\tilde{X} = X$ and $\tilde{Y} = Y$, (\ref{rr.ass.1}) and
(\ref{rr.ass.2}) reduce to the standard remainder form (\ref{nwd.ndef.4}), and
part (iv) of the assumption is implied by part (i);
the following result then reduces to the standard chain rule for Newton derivatives.
\begin{proposition}[Refined Chain Rule, Newton Derivative]\label{rr.newcomp}
\hfill\\
Let Assumption \ref{rr.ass} hold.
Then
\begin{equation}\label{rr.newcomp.1}
\begin{split}
G:U\toto\mathcal{L}(X;Z) \qquad \qquad \qquad
\\
G(u) = \{L_2 \circ L_1: L_1\in G_1(u), L_2\in G_2(F_1(u)) \}
\end{split}
\end{equation}
is a Newton derivative of $F = F_2\circ F_1$ in $U$ which satisfies,
for every $u\in U$,
\begin{equation}\label{rr.newcomp.100}
\sup_{L\in G(u+h)}
\|F(u+h) - F(u) - Lh\|_Z \le \rho_u(\|h\|_{\tilde{X}})\cdot \|h\|_X 
\end{equation}
for every $h\in X$ with $u+h\in U$, where $\rho_u:\real_+\to\real_+$
is a function with $\rho_u(\delta)\downarrow 0$ for $\delta\downarrow 0$.
\end{proposition}
\begin{proof}
Let $u\in U$, $h\in X$ with $u+h\in U$, set $k = F_1(u+h) - F_1(u)$.
Let $L_1\in G_1(u+h)$, $L_2\in G_2(F_1(u+h)) = G_2(F_1(u) + k)$.
By the triangle inequality,
\begin{equation}\label{rr.newcomp.2}
\begin{split}
&\|(F_2\circ F_1)(u+h) - (F_2\circ F_1)(u) - (L_2\circ L_1)h\|_Z
\\ &\qquad \le
\|F_2(F_1(u) + k) - F_2(F_1(u)) - L_2 k\|_Z + \|L_2(k - L_1 h)\|_Z
\end{split}
\end{equation}
Since $G_2$ is locally bounded, there exists a $C>0$ such that for
sufficiently small $\|h\|_X$ we have $\|L_2\|\le C$ for all
$L_2\in G_2(F_1(u+h))$. Consequently, for all such $h$ and $L_2$,
and for all $L_1\in G_1(u+h)$ we have by (\ref{rr.ass.1})
\begin{equation}\label{rr.newcomp.3}
\|L_2(k - L_1 h)\|_Z \le C \|F_1(u+h) - F_1(u) - L_1 h\|_Y
\le C \rho_{1,u}(\|h\|_{\tilde{X}})\cdot \|h\|_X \,.
\end{equation}
Moreover, by (\ref{rr.ass.2})
\begin{equation}\label{rr.newcomp.4}
\|F_2(F_1(u) + k) - F_2(F_1(u)) - L_2 k\|_Z
\le \rho_{2,F_1(u)}(\|k\|_{\tilde{Y}})\cdot \|k\|_Y \,.
\end{equation}
Since $F_1$ is locally Lipschitz, $\|k\|_Y \le C_1\|h\|_X$ for small
enough $\|h\|_X$.

Now let us define $\tilde{\rho}_u:\real_+\to\real_+$ by
\begin{equation}\label{rr.newcomp.5}
\tilde{\rho}_u(\lambda) = \sup \{\|F_1(u+h) - F_1(u)\|_{\tilde{Y}}:
\|h\|_{\tilde{X}} \le \lambda\} \,.
\end{equation}
By part (iv) of Assumption \ref{rr.ass}, 
$\tilde{\rho}_u(\lambda)\to 0$ as $\lambda\to 0$.
Putting together the estimates obtained so far, we get
\begin{equation}\label{rr.newcomp.6}
\begin{split}
&\|(F_2\circ F_1)(u+h) - (F_2\circ F_1)(u) - (L_2\circ L_1)h\|_Z
\\ &\qquad \qquad  \le
\big(C_1 \rho_{2,F_1(u)}(\tilde{\rho}_u(\|h\|_{\tilde{X}})) +
C \rho_{1,u}(\|h\|_{\tilde{X}})\big)\cdot \|h\|_X
\end{split}
\end{equation}
independent from the choice of $L_1$ and $L_2$, as long as $\|h\|_X$
is sufficiently small. Setting
\[
\rho_u(\lambda) =
C_1 \rho_{2,F_1(u)}(\tilde{\rho}_u(\lambda)) + C \rho_{1,u}(\lambda)
\]
we have $\rho_u(\lambda)\to 0$ as $\lambda\to 0$. Thus, it follows
from (\ref{rr.newcomp.6}) that (\ref{rr.newcomp.100}) holds.
\end{proof}

In order to obtain the refined chain rule for Bouligand derivatives,
we replace Assumption \ref{rr.ass}(iii) by
\begin{quote}
$F_1$ and $F_2$ are Bouligand differentiable in $U$ and $V$, respectively.
For every $u\in U$, $v\in V$ we have
\begin{equation}\label{rr.ass.5}
\begin{split}
\|F_1(u+h) - F_1(u) - F_1'(u;h)\|_Y &\le \rho_{1,u}(\|h\|_{\tilde{X}})\cdot \|h\|_X
\\
\|F_2(v+k) - F_2(v) - F_2'(v;k)\|_Z &\le \rho_{2,v}(\|k\|_{\tilde{Y}})\cdot \|k\|_Y
\end{split}
\end{equation}
for every $h\in X$ with $u+h\in U$ and every $k\in Y$ with $v+k\in V$.
\end{quote}
\begin{lemma}\label{nwd.chain}
If $F_1$ and $F_2$ are Hadamard differentiable at $u$ resp. $F_1(u)$, then
$F_2\circ F_1$ is Hadamard differentiable at $u$, and the chain rule
\begin{equation}\label{nwd.chain.1}
(F_2\circ F_1)'(u;h) = F_2'(F_1(u);F_1'(u;h))
\end{equation}
holds for all $h\in X$.
\hfill$\Box$
\end{lemma}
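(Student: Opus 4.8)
The statement to prove is the chain rule for Hadamard derivatives (Lemma \ref{nwd.chain}): if $F_1$ is Hadamard differentiable at $u$ and $F_2$ is Hadamard differentiable at $F_1(u)$, then $F_2\circ F_1$ is Hadamard differentiable at $u$ with $(F_2\circ F_1)'(u;h) = F_2'(F_1(u);F_1'(u;h))$.

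The plan is to argue directly from the defining property (\ref{nwd.wdef.2}) of the Hadamard derivative, which says that the directional limit is stable under perturbations of the increment that are $o(\lambda)$. First I would fix $h\in X$ and an arbitrary function $r:(0,\lambda_0)\to X$ with $r(\lambda)/\lambda\to 0$, and write $v := F_1(u)$, $k := F_1'(u;h)$. Set $u_\lambda := u + \lambda h + r(\lambda)$. The key observation is that, because $F_1$ is Hadamard differentiable at $u$, we have
\[
F_1(u_\lambda) = F_1(u) + \lambda k + s(\lambda), \qquad \frac{s(\lambda)}{\lambda}\to 0 \text{ in } X,
\]
where $s(\lambda) := F_1(u_\lambda) - F_1(u) - \lambda k$; the convergence $s(\lambda)/\lambda\to 0$ is exactly the content of (\ref{nwd.wdef.2}) applied to $F_1$ with the perturbation $r$. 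Then
\[
(F_2\circ F_1)(u_\lambda) = F_2\bigl(v + \lambda k + s(\lambda)\bigr),
\]
and since $s(\lambda)/\lambda\to 0$, the perturbation $s(\lambda)$ is admissible in the Hadamard difference quotient for $F_2$ at $v$ in the direction $k$. Hence
\[
\lim_{\lambda\downarrow 0}\frac{(F_2\circ F_1)(u_\lambda) - (F_2\circ F_1)(u)}{\lambda}
= \lim_{\lambda\downarrow 0}\frac{F_2(v+\lambda k + s(\lambda)) - F_2(v)}{\lambda}
= F_2'(v;k),
\]
using the Hadamard differentiability of $F_2$ at $v$ in direction $k$. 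Taking $r\equiv 0$ first shows that $(F_2\circ F_1)'(u;h)$ exists and equals $F_2'(F_1(u);F_1'(u;h))$, which is (\ref{nwd.chain.1}); allowing general admissible $r$ then shows this directional derivative satisfies (\ref{nwd.wdef.2}), i.e., $F_2\circ F_1$ is Hadamard differentiable at $u$.

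The only genuinely delicate point is the bookkeeping of "admissible perturbation": one must check that $s(\lambda)/\lambda \to 0$ really does follow from the Hadamard property of $F_1$ — this is immediate from rewriting (\ref{nwd.wdef.2}) for $F_1$ as $\bigl(F_1(u+\lambda h + r(\lambda)) - F_1(u)\bigr)/\lambda \to F_1'(u;h)$ — and that the composite perturbation $\lambda h + r(\lambda)$ together with the induced perturbation $s(\lambda)$ stay inside the open sets $U$ and (a neighbourhood of $v$ in) the domain of $F_2$ for $\lambda$ small, which holds by openness and continuity considerations. Everything else is a routine limit manipulation, so I do not expect a real obstacle; the lemma is, as the text says, well known, and the proof is a two-line epsilon-free argument once the admissibility of $s(\lambda)$ is noted.
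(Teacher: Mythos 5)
Your argument is correct, and it is the standard proof of the Hadamard chain rule: push the perturbation through $F_1$ to produce a new admissible perturbation $s(\lambda)$ for $F_2$, then invoke the Hadamard property of $F_2$ at $F_1(u)$ in direction $k=F_1'(u;h)$. Note, though, that the paper itself does not prove this lemma; it simply cites Bonnans--Shapiro, Proposition 2.47, so there is no internal proof to compare against. Your write-up is nonetheless sound: the identity $s(\lambda)/\lambda\to 0$ is exactly the Hadamard property of $F_1$ applied with the given $r$, and the fact that $s(\lambda)\to 0$ then guarantees $F_1(u+\lambda h+r(\lambda))$ stays in a neighbourhood of $F_1(u)$, so the Hadamard property of $F_2$ is applicable. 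One small remark: you obtain both the existence of the directional derivative and its stability under admissible perturbations in one stroke, since the case $r\equiv 0$ is just a particular admissible $r$; stating this explicitly, as you did at the end, makes the logic cleanest.
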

\begin{proof}
See e.g. \cite{BoS}, Proposition 2.47.
\end{proof}
\begin{proposition}[Refined Chain Rule, Bouligand Derivative]\label{rr.boucomp}\hfill\\
Let (i) - (iv) of Assumption \ref{rr.ass} hold, with (iii) replaced by (\ref{rr.ass.5}).
Then $F = F_2\circ F_1$ is Bouligand differentiable in $U$, and
\begin{equation}\label{rr.boucomp.1}
F'(u;h) = F_2'(F_1(u);F_1'(u;h)) \,.
\end{equation}
Moreover, for every $u\in U$ and $h\in X$ with $u+h\in U$
\begin{equation}\label{rr.boucomp.2}
\|F(u+h) - F(u) - F_2'(F_1(u);F_1'(u;h))\|_Z \le \rho_u(\|h\|_{\tilde{X}}) \|h\|_X
\end{equation}
for some $\rho_u:\real_+\to\real_+$ with 
$\rho_u(\delta)\downarrow 0$ as $\delta\downarrow 0$.
\end{proposition}
\begin{proof}
By Lemma \ref{nwd.chain}, $F$ is Hadamard differentiable and the
chain rule holds. It remains to show (\ref{rr.boucomp.2}) for the remainder.
Let $u\in U$, $h\in X$ with $u+h\in U$, set
$k = F_1(u+h) - F(u)$. We have
\begin{align}
&F_2(F_1(u+h)) - F_2(F_1(u)) - F_2'(F_1(u);F_1'(u;h)) =
\nonumber \\ &\qquad\qquad \qquad
\big( F_2(F_1(u)+k) - F_2(F_1(u)) - F_2'(F_1(u);k)) \big)
\label{rr.boucomp.3}
\\ &\qquad\qquad \qquad \quad
+ \big( F_2'(F_1(u);k)) - F_2'(F_1(u);F_1'(u;h)) \big)
\nonumber
\end{align}
Let $C_i$ be local Lipschitz constants for $F_i$.
The inequality
\[  
C_2\|k - F_1'(u;h))\|_X \le C_2 \rho_{1,u}(\|h\|_{\tilde{X}}) \|h\|_X
\]  
yields an estimate for the second term on the right side of
(\ref{rr.boucomp.3}); the first term is estimated by
\[  
\rho_{2,F_1(u)}(\|k\|_{\tilde{Y}})\cdot \|k\|_Y \,.
\]  
Since $\|k\|_Y \le C_1 \|h\|_X$, we argue as in the proof of
Proposition \ref{rr.newcomp} and obtain, with $\tilde{\rho}_u$
defined as in (\ref{rr.newcomp.5}),
\begin{align*}\label{rr.boucomp.7}
&\|(F_2\circ F_1)(u+h) - (F_2\circ F_1)(u) - F_2'(F_1(u);F_1'(u;h))\|_Z
\\ &\qquad \qquad  \le
\big(C_1 \rho_{2,F_1(u)}(\tilde{\rho}_u(\|h\|_{\tilde{X}})) +
C_2 \rho_{1,u}(\|h\|_{\tilde{X}})\big)\cdot \|h\|_X \,.
\end{align*}
From this, the claim readily follows.
\end{proof}

\section{The scalar play and stop operators}

The original construction of the play and the stop operators
in \cite{KDE70} is based on piecewise monotone input functions.
A continuous function $u:[a,b]\to\real$ is called \textbf{piecewise monotone},
if the restriction of $u$ to each interval $[t_i,t_{i+1}]$ of a suitably
chosen partition $\Delta_{pm} = \{t_i\}$, $a = t_0 < t_1 < \dots < t_N = b$,
called a \textbf{monotonicity partition} of $u$,
is either nondecreasing or nonincreasing. By $C_{pm}[a,b]$ we denote
the space of all such functions.

For arbitrary $r\ge 0$, the play operator $\playr$ and the stop operator
$\stopr$ are constructed as follows.
(For more details, we refer to Section 2.3 of \cite{BS}.)
Given a function $u\in C_{pm}[a,b]$ and an initial value $z_0\in [-r,r]$,
we define functions $w,z:[a,b]\to\real$
successively on the intervals $[t_i,t_{i+1}]$, $0 \le i < N$, of a monotonicity
partition $\Delta_{pm}$ of $u$ by
\begin{equation}\label{po.0}
z(a) = \pi_r(z_0) := \max\{-r,\min\{r,z_0\}\} \,,\quad
w(a) = u(a) - z(a) \,,
\end{equation}
and
\begin{equation}\label{po.1}
\begin{aligned}
w(t) &= \max\{u(t) - r \,,\,\min\{u(t) + r , w(t_i)\}\} \,,
\\
z(t) &= v(t) - w(t) \,,
\end{aligned}
\qquad t_i < t \le t_{i+1} \,.
\end{equation}
In this manner, we obtain operators
\[
w = \playr[u;z_0] \,,\quad  z = \stopr[u;z_0] \,,\quad
\playr,\stopr: C_{pm}[a,b]\times\real\to C_{pm}[a,b] \,.
\]
By construction,
\begin{equation}\label{po.01}
u = w + z = \playr[u;z_0] + \stopr[u;z_0] \,.
\end{equation}
The play operator satisfies
\begin{equation}\label{po.4}
\|\playr[u;z_0] - \playr[v;y_0]\| \le \max\{\|u - v\| \,,\, |z_0 - y_0|\}
\end{equation}
for all $u,v\in C_{pm}[a,b]$ and all $z_0,y_0\in\real$.
Therefore, $\playr$ and $\stopr$ can be uniquely extended to Lipschitz continuous
operators
\[
\playr,\stopr: C[a,b]\times\real\to C[a,b]
\]
which satisfy (\ref{po.4}) for all $u,v\in C[a,b]$ and all $z_0,y_0\in\real$.

In \cite{BK}, Hadamard derivatives of $\playr$ and of $\stopr$ have been
obtained. We recall some of the terminology used there, as it is also
relevant for the present paper.

Let $(u,z_0)\in C[a,b]\times\real$ be given, let $w = \playr[u;z_0]$,
$z = \stopr[u;z_0]$ with $r > 0$. (For $r=0$, $\playr[u;z_0] = u$.)
The trajectories $\{(u(t),w(t)): t\in [a,b]\}$
lie within the subset $A = \{|u-w| \le r\}$ of the plane $\real^2$ 
bounded by the straight lines $u-w = \pm r$.
They consist of parts which belong to the interior, the right or the left
boundary of $A$.
Correspondingly, the time interval $[a,b]$ decomposes into the three disjoint
sets
\begin{align*}
I_0 &= \{t\in [a,b]: |u(t) - w(t)| = |z(t)| < r\} \,,
\\
I_{\partial +} &= \{t\in [a,b]: u(t) - w(t) = z(t) = r\} \,,
\\
I_{\partial -} &= \{t\in [a,b]: u(t) - w(t) = z(t) = -r \} \,.
\end{align*}
The set $I_0$ is an open subset of $[a,b]$, the sets $I_{\partial\pm}$
are compact. As $I_{\partial +}$ and $I_{\partial -}$ are disjoint,
\begin{equation}\label{po.9}
\delta_I :=
\min\{|\tau-\sigma|: \tau\in I_{\partial +}\,,\,\sigma\in I_{\partial -}\} > 0 \,.
\end{equation}
Because of this, there exists a finite partition $\Delta(u,z_0) = \{t_k\}$ of $[a,b]$
such that on each partition interval $I_k = [t_{k-1},t_k]$ we have
$z(t) > -r$ for all $t\in I_k$ or $z(t) < r$ for all $t\in I_k$,
or both.
In the former case, $I_k$ is called a \textbf{plus interval};
on $I_k$ the trajectory stays away from the left boundary of $A$,
and $I_k \subset I_0\cup I_{\partial +}$.
In the latter case, $I_k$ is called a \textbf{minus interval};
the trajectory stays away from the right boundary of $A$,
and $I_k \subset I_0\cup I_{\partial -}$.
Note that if $I_k\subset I_0$, then $I_k$ is a plus as well as a minus interval.

It has been proved in \cite{BK}, Lemma 5.1, that on such intervals the play
operator behaves like an cumulated maximum resp. minimum. More precisely,
on a plus interval $I_k$,
\begin{equation}\label{po.5}
w(t) = \playr[u;z_0](t) = \max \{ w(t_{k-1})\,,\,\max_{s\in [t_{k-1},t]} (u(s) - r)\}
\end{equation}
holds, no matter whether $u$ is monotone on $I_k$ or not.
On a minus interval,
\begin{equation}\label{po.6}
w(t) = \playr[u;z_0](t) = \min \{ w(t_{k-1})\,,\,\min_{s\in [t_{k-1},t]} (u(s) + r)\} \,.
\end{equation}
In particular, $w(t) = w(t_{k-1})$ if $I_k \subset I_0$.

Due to (\ref{po.9}) and the continuity of $\playr$, in this manner the
play and the stop operator can locally be represented by a finite
composition of operators arising from the cumulated maximum resp.
minimum. The following result has been proved in \cite{BK},
Lemma 5.2.
\begin{proposition}\label{po.locrep}
For every $(u,z_0)\in C[a,b]\times\real$ there exists a partition
$\Delta(u,z_0) = \{t_k\}_{0\le k\le N}$ of $[a,b]$ and a $\delta > 0$ such
that every partition interval $[t_{k-1},t_k]$ of $\Delta$ is a plus interval
for all $(v,y_0)\in U_\delta\times\real$, or it is a minus interval
for all $(v,y_0)\in U_\delta\times\real$. Here,
\begin{equation}\label{po.locrep.1}
U_\delta := \{(v,y_0):
\text{$\|v - u\|_\infty < \delta$, $|y_0 - z_0| < \delta$, $v\in C[a,b]$,
$y_0\in\real$}\}
\end{equation}
is the $\delta$-neighbourhood of $(u,z_0)$ w.r.t the maximum norm.
\hfill$\Box$
\end{proposition}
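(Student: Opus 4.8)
The plan is to prove Proposition \ref{po.locrep} by combining the separation property \eqref{po.9} with the continuous dependence \eqref{po.4} of the play operator, and then refining a sufficiently fine partition. First I would fix $(u,z_0)\in C[a,b]\times\real$, set $w = \playr[u;z_0]$, and consider the closed sets $I_+$ and $I_-$ together with the open set $I_0 = [a,b]\setminus(I_+\cup I_-)$. If $r = 0$ the claim is trivial (every interval is both a plus and a minus interval), so assume $r > 0$; then by \eqref{po.9} the number $\delta_I > 0$ is well-defined and $I_+$, $I_-$ are separated by at least $\delta_I$. The key geometric observation is that any subinterval $[t_{k-1},t_k]$ of length strictly less than $\delta_I$ can meet at most one of $I_+$, $I_-$: it is automatically a plus interval (if it misses $I_-$) or a minus interval (if it misses $I_+$), or both if it meets neither.

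Next I would turn the statement about $u$ into a statement that is robust under small perturbations $(v,y_0)$. Choose the partition $\Delta = \{t_k\}_{0\le k\le N}$ of $[a,b]$ with mesh size less than $\delta_I/2$, say, so that each interval $[t_{k-1},t_k]$ has length $< \delta_I/2$. By the above, for each $k$ the interval $[t_{k-1},t_k]$ either avoids $I_-$ (call it a ``$+$''-interval) or avoids $I_+$ (a ``$-$''-interval); mixed behaviour is impossible. In the first case, since $u - w < r$ on the compact set $[t_{k-1},t_k]$ except possibly where it equals $r$ — more precisely, since $[t_{k-1},t_k]\cap I_- = \emptyset$ and $I_-$ is closed, there is a gap: $\operatorname{dist}([t_{k-1},t_k],I_-) > 0$, hence $u - w > -r + c_k$ on $[t_{k-1},t_k]$ for some $c_k > 0$. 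Taking $c = \min_k c_k > 0$ over the finitely many ``$+$''-intervals (and the analogous bound on the ``$-$''-intervals), we obtain a single constant $c > 0$ such that on every ``$+$''-interval $u - w > -r + c$ and on every ``$-$''-interval $u - w < r - c$.

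Then I would invoke the Lipschitz estimate \eqref{po.4}: for $(v,y_0)\in U_\delta\times\real$ with $\delta \le c/3$ we have $\|\playr[v;y_0] - \playr[u;z_0]\|_\infty \le \max\{\|v-u\|_\infty,|y_0-z_0|\} < \delta$, and of course $\|v - u\|_\infty < \delta$. Writing $\tilde w = \playr[v;y_0]$, this gives $\|(v - \tilde w) - (u - w)\|_\infty < 2\delta \le 2c/3$ uniformly on $[a,b]$. Consequently, on a ``$+$''-interval $[t_{k-1},t_k]$ we get $v - \tilde w > -r + c - 2c/3 = -r + c/3 > -r$, so that interval contains no point with $v - \tilde w = -r$, i.e. it is a plus interval for $(v,y_0)$; symmetrically the ``$-$''-intervals are minus intervals for $(v,y_0)$. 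Choosing $\delta := c/3$ and this $\Delta$ then yields exactly the assertion of the proposition.

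The main obstacle — really the only nontrivial point — is establishing the uniform lower bound $c > 0$ on $\min\{|\tau - \sigma| : \tau\in I_+, \sigma\in I_-\}$ and then converting the separation of the \emph{sets} $I_\pm$ into a uniform \emph{pointwise} margin $u - w > -r + c$ on each relevant partition interval; this rests on compactness of $[a,b]$, closedness of $I_+$ and $I_-$, and continuity of $u$, $w$, $z$, all of which are already available. Everything after that is a routine application of the Lipschitz bound \eqref{po.4}. One should also record the degenerate case $r=0$ separately at the outset, since then $I_0 = \emptyset$, $I_+ = I_- = [a,b]$, $\delta_I$ is not defined, and any partition with any $\delta > 0$ works because every interval is simultaneously a plus and a minus interval.
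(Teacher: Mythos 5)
The paper does not actually prove this proposition; it states it and cites \cite{BK}, Lemma 5.2, so a line-by-line comparison with the source proof is not possible here. But the two ingredients the paper explicitly flags as the basis of that result --- the separation constant $\delta_I > 0$ from (\ref{po.9}) and the Lipschitz estimate (\ref{po.4}) --- are precisely the ones you use, and in the natural way: a partition of mesh $< \delta_I$ so that no partition interval can meet both $I_+$ and $I_-$, a compactness argument producing a uniform margin $c>0$ between $u-w$ and $\pm r$ on each interval, and (\ref{po.4}) to transport that margin to every $(v,y_0)$ in a $\delta$-neighbourhood. The chain of estimates is sound and the final choice $\delta = c/3$ delivers the claim.

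Two small corrections. First, the $r=0$ aside is wrong: for $r=0$ the play is the identity, so $u - w \equiv 0 = r = -r$, hence $I_+ = I_- = [a,b]$ and no nondegenerate subinterval is a plus \emph{or} a minus interval under the paper's definitions. The proposition is not meant to cover this case; the sets $I_0, I_\pm$ and the constant $\delta_I$ are introduced in the text under the explicit hypothesis $r>0$, so simply omit that paragraph rather than claiming triviality. Second, the inference ``$\operatorname{dist}([t_{k-1},t_k], I_-) > 0$, hence $u - w > -r + c_k$ on $[t_{k-1},t_k]$'' is a non sequitur: positive distance between two subsets of the time axis does not by itself bound the values of $u-w$. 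What you actually need --- and correctly identify in your closing paragraph --- is that the continuous function $u-w$ is strictly greater than $-r$ at every point of the compact set $[t_{k-1},t_k]$ (because that set misses $I_-$ and $u-w$ takes values in $[-r,r]$), so its minimum there exceeds $-r$. Cut the distance detour and state the compactness argument directly.
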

As a consequence, invoking the chain rule for Hadamard derivatives,
it has been proved in \cite{BK} that $\playr$ and $\stopr$ are
Hadamard differentiable on $C[a,b]\times\real$, if $L^q(a,b)$ with
$q < \infty$ is chosen as the range space.
\section{Newton derivative of the play and the stop}
We want to use the approach outlined in the previous section
in order to obtain a Newton derivative of $\playr$,
based on the Newton derivative of the cumulated maximum.
\par
We want to construct the Newton derivative such
that its dependence upon $(u,z_0)$ becomes measurable in a suitable
manner; for this, the local representation of the play
obtained from Proposition \ref{po.locrep} seems to be of very limited value.
Instead, we employ properties of the set-valued mappings involved when
constructing above the Newton derivative of the cumulated maximum.
To this purpose, we turn around the approach of Proposition \ref{po.locrep}.
Instead of finding a suitable partition $\Delta$ for a given $(u,z_0)$, 
for a given partition $\Delta$ we consider sets of $(u,z_0)$ for which
the play can be ``decomposed'' by $\Delta$.

Throughout the following, the space $X$ stands for $C^{0,\alpha}[a,b]$ or $W^{1,p}(a,b)$.

Let $\Delta = \{t_k\}$ be a partition of $[a,b]$, $a = t_0 < \dots < t_N = b$
for some $N\in\nat$. We set
\[
I_k = [t_{k-1},t_k] \,,\quad
|\Delta| = \max_{1\le k\le N} |I_k| = \max_{1\le k\le N} (t_k - t_{k-1}) \,.
\]
We define
\begin{equation}\label{po.part.1}
\begin{split}
C^\Delta &= \{u: u\in C[a,b],\,z_0\in\real,\,
\osc_{I_k}(u) < r \text{ for all $k$}\} \,,
\\ X^\Delta &= X\cap C^\Delta \,,
\\  Z^\Delta &= C^\Delta \times \real =
\{(u,z_0): u\in C^\Delta,\,z_0\in\real \} \,.
\end{split}
\end{equation}
The sets $C^\Delta$, $X^\Delta$ and $Z^\Delta$ are open subsets of $C[a,b]$,
$X$ and $C[a,b]\times\real$, respectively.


\textbf{The dynamics on an interval for small input oscillation.}

It turns out below in Proposition \ref{po.decp} that an interval
$I\subset [a,b]$ is a plus or a minus interval for the play if the 
oscillation of $u$ on $I$ is less than $r$. This and some other
auxiliary results are developed up to Proposition \ref{po.decbn}.

Let $I = [t_*,t^*]\subset [a,b]$, $u\in C(I)$. We denote the cumulated
maximum of $u$ on $I$ and the sets where it is attained by
\begin{equation}\label{po.dec.1}
\begin{split}
(F^Iu)(t) = \max_{s\in I,s\le t} u(s) \,,\quad t\in I \,, \qquad \quad
\\
M^I(u,t) = \{s: s\in I, s\le t,u(s) = (F^Iu)(t)\} \,.
\end{split}
\end{equation}
As above, $F^I:C(I)\to C(I)$, $F^I u$ is nondecreasing and
$F^I(u+c) = F^I(u) + c$ if $c$ is a constant. Moreover,
\begin{gather}\label{po.dec.2}
\osc_I(F^Iu) = u(t^*) - u(t_*) \le \osc_I(u) \,,
\\ \label{po.dec.3}
0 \le F^I(u) - u \le \osc_I u  \quad \text{on $I$,}
\end{gather}
and consequently
\begin{equation}\label{po.dec.4}
\osc_I(F^I(u) - u) \le \osc_I u \,.
\end{equation}
The cumulated minimum of $u$ on $I$ can be written as
\begin{equation}\label{po.dec.42}
\min_{s\in I,s\le t} u(s) = - (F^I(-u))(t) \,,\quad t\in I \,.
\end{equation}
The corresponding sets of minima are given by $M(-u,t)$.

For $u\in C(I)$, $p\in\real$ and $r>0$ we define the functions
(here and in the following, the max and the min are taken pointwise in $t$)
\begin{equation}\label{po.dec.5}
\begin{split}
w_+ = \max\{p, F^I(u-r)\} \,,\quad z_+ = u - w_+ \,,\quad
\\
w_- = \min\{p, - F^I(-u-r)\} \,,\quad z_- = u - w_- \,.
\end{split}
\end{equation}
This corresponds to the operations in (\ref{po.5}) and (\ref{po.6}).
We have $w_+,w_-,z_+,z_-\in C(I)$. Obviously $w_-\le w_+$, $z_+\le z_-$.

Since $p\le w_+ = u - z_+$ and $p\ge w_- = u - z_-$, we have
\begin{equation}\label{po.dec.6}
z_+ \le u - p \le z_- \,.
\end{equation}
\begin{lemma}\label{po.dec.zrand}
Let $u\in C(I)$, $p\in\real$, $r > 0$. \hfill\\
(i) We have $z_+\le r$ on $I$. If $z_+(t) = r$ for some $t\in I$, then
$u(t) = (F^I u)(t) \ge p + r$. \hfill\\
(ii) We have $z_-\ge -r$ on $I$. If $z_-(t) = -r$ for some $t\in I$, then
$u(t) = -(F^I (-u))(t) \le p - r$.
\end{lemma}
\begin{proof} To obtain (i), we use the estimate
\begin{align*}
z_+ &= u - w_+ = u - \max\{p, F^I(u-r)\}
= u - p - \max\{0, F^I(u-r-p)\}
\\
&\le u - p - F^I(u-r-p) = u - F^I(u) + r \le r \,.
\end{align*}
If $z_+(t) = r$, equality holds everywhere, so $u(t) = (F^Iu)(t)$
and $F^I(u-r-p)(t) \ge 0$. The proof of (ii) is analogous.
\end{proof}

We consider inputs in $C(I)$ whose oscillation is smaller than $r$.
\begin{equation}\label{po.dec.7}
\begin{split}
Z^I &= \{(u,p): u\in C(I),p\in\real,\osc_I u < r\}
\\
Z_+^I &= \{(u,p): u\in C(I),p\in\real,\osc_I u < r, z_+ > -r \text{ on $I$}\}
\\
Z_-^I &= \{(u,p): u\in C(I),p\in\real,\osc_I u < r, z_- < r \text{ on $I$}\}
\end{split}
\end{equation}
The sets $Z_+^I$ and $Z_-^I$ are open subsets of $Z^I$ in $C(I)\times\real$;
we will see that they correspond to plus and minus intervals for the play.
\begin{lemma}\label{po.dec.wpr}
\hfill\\ (i)
If $(u,p)\in Z_-^I$ then $F^I(u-r-p) < 0$ and $w_+ = p$ on $I$.
\hfill\\ (ii)
If $(u,p)\in Z_+^I$ then $F^I(-u-r+p) < 0$ and $w_- = p$ on $I$.
\hfill\\ (iii)
If $(u,p)\in Z_-^I\cap Z_+^I$ then $w_+ = w_- = p$ and $z_+ = z_- = u-p$ on $I$.
\end{lemma}
\begin{proof}
If $(u,p)\in Z_-^I$ then $u-p-r \le z_- -r < 0$ by (\ref{po.dec.6}),
so $F^I(u-r) - p < 0$, so $w_+ = p$.
If $(u,p)\in Z_+^I$ then $-u+p-r \le -z_+ -r < 0$ by (\ref{po.dec.6}),
so $F^I(-u-r) + p < 0$, so $w_- = p$.
\end{proof}
\begin{lemma}\label{po.dec.zpr}
Let $u\in C(I)$, $\osc_I(u) < r$, $p\in\real$. Then
\begin{equation}\label{po.dec.zpr.1}
\min\{u-p,0\} \le z_+ \le z_- \le \max\{u-p,0\} \,.
\end{equation}
\end{lemma}
\begin{proof}
We have
\[
- z_+ = w_+ - u = \max\{p-u, F^I(u) - r - u\} \le \max\{ p - u, 0\} \,,
\]
since $F^I u - u \le \osc_I u < r$ by (\ref{po.dec.3}). Analogously,
\[
- z_- = w_- - u = \min\{p-u, - F^I(-u - r) - u\} \ge \min\{ p - u, 0\} \,,
\]
since $ F^I (-u) - (-u) \le \osc_I (-u) < r$ by (\ref{po.dec.3}).
\end{proof}
\begin{lemma}\label{po.dec.zalles}
We have $Z^I = Z_+^I\cup Z_-^I$.
\end{lemma}
\begin{proof}
Let $(u,p)\in Z^I$,
assume that $(u,p)\notin Z_+^I$. Then $z_+(t) \le -r$ for some $t\in I$.
By (\ref{po.dec.zpr.1}), $u(t) - p \le -r$. As $\osc_I(u) < r$, we have
$u - p \le 0$ on $I$. By (\ref{po.dec.zpr.1}), $z_- \le 0$ on $I$,
so $(u,p)\in Z_-^I$.
\end{proof}
We define $P_+^I:Z_+^I \to C(I)$ and $P_-^I:Z_-^I \to C(I)$ by
\begin{equation}\label{po.dec.10}
\begin{split}
P_+^I(u,p) &= p + \max\{0, F^I(u-r-p)\} \,,
\\
P_-^I(u,p) &= p - \max\{0, F^I(-u-r+p)\} \,.
\end{split}
\end{equation}
Therefore, in view of (\ref{po.dec.5}),
\begin{align}\label{po.dec.101}
&u - P^I_+(u,p) = u - w_+ = z_+ > -r \quad \text{on $I$}
\quad \Leftrightarrow \quad (u,p)\in Z^I_+,
\\ \label{po.dec.102}
&u - P^I_-(u,p) = u - w_- = z_- < r \;\;\; \quad   \text{on $I$}
\quad \Leftrightarrow \quad (u,p)\in Z^I_- \,.
\end{align}
%
%
On $Z_+^I\cap Z_-^I$ both expressions simplify to $P_\pm^I(u,p) = p$
by Lemma \ref{po.dec.wpr}. Therefore,
\begin{equation}\label{po.dec.11}
P^I(u,p) = P_\pm^I(u,p) \,,\quad \text{if $(u,p)\in Z_\pm^I$}
\end{equation}
yields a well-defined mapping $P^I: Z^I\to C(I)$.
%
%
%

The next result states that for $u\in C^\Delta$ the intervals $I_k$
yield a decomposition of the play operator. This is the analogue of
Proposition \ref{po.locrep}.

\begin{proposition}\label{po.decp}
Let $u\in C^\Delta$ and $z_0\in\real$, set $p = \playr[u;z_0](t_{k-1})$,
$k\ge 1$. Then \hfill\\
\begin{equation}\label{po.decp.2}
w(t) = \playr[u;z_0](t) = P^{I_k}(u,p)(t) \,,\quad \text{for all $t\in I_k$.}
\end{equation}
Moreover,
\begin{equation}\label{po.decp.20}
\begin{split}
\text{$I_k$ is a plus interval} \quad \Leftrightarrow \quad
(u,p)\in Z_+^{I_k} \,,
\\
\text{$I_k$ is a minus interval} \quad \Leftrightarrow \quad
(u,p)\in Z_-^{I_k} \,.
\end{split}
\end{equation}
\end{proposition}
\begin{proof}
Let $\{u_n\}$ be a sequence in $C[a,b]$ such that the functions $u_n$
coincide with $u$ on $[0,t_{k-1}]$, are piecewise linear on $I_k$
and satisfy $u_n\to u$ uniformly.
For $n$ large enough we have $u_n\in C^\Delta$, so $z_n > -r$ on $I_k$
if $(u,p)\in Z_+^{I_k}$ and $z_n < r$ if $(u,p)\in Z_-^{I_k}$. It follows
that $w_n = \playr[u_n;z_0] = P^{I_k}(u_n,p)$ on $I_k$, by the
definition of the play on $C_{pm}(I_k)$, see (\ref{po.1}).
Passing to the limit $n\to\infty$ yields (\ref{po.decp.2}).
To prove the first equivalence in (\ref{po.decp.20}), let $I_k$ be a plus
interval. On $I_k$ we then have $u-w > -r$ and, by (\ref{po.5}),
$w = P_+^{I_k}(u,p)$, so $(u,p) \in Z_+^{I_k}$ by (\ref{po.dec.101}).
Conversely, if $(u,p) \in Z_+^{I_k}$, we have
$u - P^{I_k}(u,p) = u - P_+^{I_k}(u,p) > -r$ on $I_k$ by (\ref{po.dec.101}),
so $u - w > -r$ by (\ref{po.decp.2}). The proof of the second
equivalence is analogous.
\end{proof}
We specify some properties of points in the ``boundary sets'' $I_{\partial\pm}$.
\begin{proposition}\label{po.decbp}
Let $u\in C^\Delta$ and $z_0\in\real$, set $p = \playr[u;z_0](t_{k-1})$,
$k\ge 1$ and $w = P^{I_k}(u,p)$. Then the following holds.
\hfill\\
(i) Let $t\in I_{\partial +} \cap I_k$. Then 
$(u,p)\in Z^{I_k}_+$ and
\begin{equation}\label{po.decp.3}
u(t) = (F^{I_k}u)(t) \ge p + r \,.
\end{equation}
(ii) Let $\tau\in I_{\partial +} \cap I_k$, $\tau \le t \le t_k$. Then
$M^{I_k}(u,t)\cap [\tau,t] \subset I_{\partial +}$.
\hfill\\
(iii) Let $(u,p)\in Z_+^{I_k}$, let $t\in I_k$ with $(F^{I_k}u)(t) \ge p + r$.
Then there exists $\tau\in [t_{k-1},t]$ with $\tau\in I_{\partial +}$.
\hfill\\
(iv) Let $(u,p)\in Z_+^{I_k}$, let $s,t\in I_k$ with $s < t$ and $w(s) < w(t)$.
Then $M^{I_k}(u,t) \subset (s,t]$.
\end{proposition}

\begin{proof}
(i) Let $t\in I_{\partial +} \cap I_k$, so $u(t) - w(t) = r$.
We have $(u,p)\in Z^{I_k}_+$ since otherwise by (\ref{po.decp.2}),
(\ref{po.dec.11}) and (\ref{po.dec.102})
\[
w(t) = P^{I_k}(u,p)(t) = P^{I_k}_-(u,p)(t) = u(t) - z_-(t) > u(t) - r \,,
\]
a contradiction. 
Since $z_+(t) = r$, the remaining assertions are a direct
consequence of Lemma \ref{po.dec.zrand}.
\hfill\\
(ii) Let $s\in M^{I_k}(u,t)$ with $s\ge\tau$. As $F^{I_k}(u-p-r)$ is nondecreasing
and $F^{I_k}(u-p-r)(\tau)\ge 0$ by (\ref{po.decp.3}), we have
\begin{align*}
w(s) - p &= \max \{0, F^{I_k}(u-p-r)(s)\} = F^{I_k}(u-p-r)(s)
\\ &\le
F^{I_k}(u-p-r)(t) = u(s) - p - r \,.
\end{align*}
As $|u(s) - w(s)| \le r$ it follows that $u(s) - w(s) = r$ and therefore
$s\in I_{\partial +}$.
\hfill\\
(iii) As
$F^{I_k}(u-p-r)(t_{k-1}) = u(t_{k-1}) - w(t_{k-1}) -r \le 0$, we find
$\sigma\in [t_{k-1},t]$ with $F^{I_k}(u-p-r)(\sigma) = 0$.
As $F^{I_k}(u-p-r)$ is nondecreasing,
we have $F^{I_k}(u-p-r) \le 0$ on $[t_{k-1},\sigma]$ and therefore
\[
w = P_+^{I_k}(u,p) = p = p + (F^{I_k}(u-p-r))(\sigma) = (F^{I_k}u)(\sigma) - r
\]
on $[t_{k-1},\sigma]$.
We choose $\tau\in [t_{k-1},\sigma]$ with $u(\tau) = (F^{I_k}u)(\sigma)$.
Then $w(\tau) = u(\tau) - r$, so $\tau \in I_{\partial +}$.
\hfill\\
(iv) By (\ref{po.decp.2}),
\[
\max \{p, F^{I_k}(u-r)(s)\} = w(s) < w(t) = \max \{p, F^{I_k}(u-r)(t)\} ,
\]
so $(F^{I_k}u)(s) < (F^{I_k}u)(t)$ and therefore $M^{I_k}(u,t) \subset (s,t]$.
\end{proof}
For minus intervals, the corresponding results read as follows. Their proofs
are analogous to those of Proposition \ref{po.decbp}.
\begin{proposition}\label{po.decbn}
Let $u\in C^\Delta$ and $z_0\in\real$, set $p = \playr[u;z_0](t_{k-1})$,
$k\ge 1$ and $w = P^{I_k}(u,p)$. Then the following holds.
\hfill\\
(i) Let $t\in I_{\partial -} \cap I_k$. Then 
$(u,p)\in Z^{I_k}_-$ and
\begin{equation}\label{po.decbn.1}
u(t) = -(F^{I_k}(-u))(t) \le p - r \,.
\end{equation}
(ii) Let $\tau\in I_{\partial -} \cap I_k$, $\tau \le t \le t_k$. Then
$M^{I_k}(u,t)\cap [\tau,t] \subset I_{\partial -}$.
\hfill\\
(iii) Let $(u,p)\in Z_-^{I_k}$, let $t\in I_k$ with $-(F^{I_k}(-u))(t) \le p - r$.
Then there exists $\tau\in [t_{k-1},t]$ with $\tau\in I_{\partial -}$.
\hfill\\
(iv) Let $(u,p)\in Z_-^{I_k}$, let $s,t\in I_k$ with $s < t$ and $w(s) > w(t)$.
Then $M^{I_k}(-u,t) \subset (s,t]$.
\hfill$\Box$
\end{proposition}
\textbf{A Newton derivative on an interval of small input oscillation.}
We want to obtain a Newton derivative for $P^I:Z^I\cap (X\times\real) \to L^q(I)$,
where $I = [t_*,t^*]\subset [a,b]$.
The mapping $P_+^I$ decomposes into
\begin{equation}\label{po.pi.1}
P_+^I(u,p) = p + F_{pp}(\tilde{F}^I(u,p)) \,.
\end{equation}
Here, $\tilde{F}^I: Z^I\cap (X\times\real)\to C(I)$ is defined as
\begin{equation}\label{po.pi.2}
\tilde{F}^I(u,p) = F^I(u-p-r) \,,
\end{equation}
and $F_{pp}$ denotes the positive part mapping
\begin{equation}\label{po.pi.3}
(F_{pp}u)(t) = \max\{0,u(t)\} \,.
\end{equation}
We first analyze the mapping $\tilde{F}^I$.
We expect to obtain a Newton derivative $\tilde{G}^I$ of $\tilde{F}^I$
if we choose elements $L\in \tilde{G}^I(u,p)$ of the form
\begin{equation}\label{po.pi.4}
(L(h,\eta))(t) =  \langle \mu^I(u,t) \,,\, h -\eta\rangle \,,
\end{equation}
where $\mu^I(u,t)$ are probability measures arising from the derivative
of the cumulated maximum on $I$.
We define $T: C(I)\times\real\to C(I)$ by $T(u,p) = u - p$
and consider the set-valued mapping
\begin{equation}\label{po.pi.5}
\begin{split}
\tilde{\Psi}^I:Z^I\times I \toto (C(I)\times\real)^* \quad \quad
\\
\tilde{\Psi}^I(u,p,t) = \Phi^I(T(u,p)-r,t)\circ T\,.
\end{split}
\end{equation}
$\Phi^I$ is the mapping defined in (\ref{mam.phit.1}) with
$[a,b]$ and $M$ replaced with $I$ and $M^I$ from (\ref{po.dec.1}).
We compute $\tilde{\Psi}^I$, using (\ref{mam.phit.3}) and (\ref{mam.phit.4}),
\begin{equation}\label{po.pi.6}
\begin{split}
\tilde{\Psi}^I(u,p,t) &= \Phi^I(u,t)\circ T
= \Phi^I(u,t)\circ (\pi_1 - j \circ \pi_2)
\\ &= \Phi^I(u,t)\circ \pi_1 - \pi_2 \,.
\end{split}
\end{equation}
Here, $\pi_1,\pi_2:C(I)\times\real\to\real$ denote the projections
$\pi_1(h,\eta) = h$ and $\pi_2(h,\eta) = \eta$,
and $j$ maps real numbers to the corresponding constant functions.
We see that $\tilde{\Psi}^I(u,p,t)$ actually does not depend on $p$.

Let $S_\Phi^I$ be the set of all measurable selectors of $\Phi^I$.
\begin{proposition}\label{po.find}
The mapping $\tilde{\Psi}^I$ is usc and has $w^*$-compact values. Moreover,
\begin{equation}\label{po.find.1}
\tilde{S}_\Psi^I =
\{ \tilde{\mu}: \tilde{\mu}(u,p,t) = \mu^I(u,t)\circ\pi_1 - \pi_2,
\mu^I\in S_{\Phi}^I \}
\end{equation}
is a set of measurable selectors of $\tilde{\Psi}^I$.
For $\tilde{F}^I: Z^I\cap (X\times\real)\to L^{\tilde{q}}(I)$
with $\tilde{q} < \infty$,
a Newton derivative $\tilde{G}^I$ is given by
\begin{equation}\label{po.find.2}
\begin{split}
\tilde{G}^I: Z^I\cap (X\times\real)\toto \mathcal{L}(X\times\real,L^{\tilde{q}}(I))
\qquad
\\
\tilde{G}^I(u,p) = \{L: \text{$L$ has the form (\ref{po.pi.4})
with $\mu^I\in S_{\Phi}^I$} \} \,.
\end{split}
\end{equation}
The elements $L$ of $\tilde{G}^I(u,p)$ satisfy
\begin{equation}\label{po.find.21}
\|L(h,\eta)\|_{\infty,t} \le \|h\|_{\infty,t} + |\eta|
\end{equation}
for all $h\in C(I)$, $\eta\in\real$, $t\in I$. 
Moreover, the remainder estimate
\begin{equation}\label{po.find.22}
\begin{split}
&\sup_{L\in \tilde{G}^I(u+p,h+\eta)}
\|\tilde{F}^I(u+h,p+\eta) - \tilde{F}^I(u,p) - L(h,\eta)\|_{L^{\tilde{q}}(I)} 
\\&\qquad \qquad \le 
\rho_{(u,p)}(\|h\|_\infty + |\eta|) \|(h,\eta)\|_{X\times\real}
\end{split}
\end{equation}
holds. 
The remainder term $\rho_{(u,p)}$ satisfies $\rho_{(u,p)}(\delta)\downarrow 0$
as $\delta\downarrow 0$ and is uniformly bounded in $(u,p)$.
\end{proposition}

\begin{proof}
Let $A: C(I)^* \to (C(I)\times\real)^*$, $A(\nu) = \nu\circ \pi_1 - \pi_2$.
Then $A$ is linear and $w^*$-$w^*$-continuous, and
$\tilde{\Psi}^I = A \circ \tilde{\Phi}^I$.
Since $\Phi^I$ is usc according to Proposition \ref{mam.phitusc} and has
$w^*$-compact values, using Lemma \ref{svm.comp} we see that
the same is true for $\tilde{\Psi}^I$.


The elements of $\tilde{S}_\Psi^I$
are measurable as compositions of measurable functions. As $F^I$ has a
Newton derivative given by Proposition \ref{am.newder} and
$\tilde{F}^I(u,p) = F^I(u-p-r)$, setting $\tilde{X} = \tilde{Y} = C(I)$ 
and $Z = L^{\tilde{q}}(I)$ we 
check that the assumptions of the refined chain rule, Proposition \ref{rr.newcomp},
are satisfied. Therefore,
$\tilde{G}^I$ is a Newton derivative of $\tilde{F}^I$ and (\ref{po.find.22}) holds.
(\ref{po.find.21}) is a consequence of (\ref{po.pi.4}) and (\ref{mam.l.1}).
Since $\tilde{F}$ is globally Lipschitz w.r.t. the maximum norm,
together with (\ref{po.find.21}) the final assertion follows.
\end{proof}

Since $\Phi^I(v,t_*) = \{\delta_{t_*}\}$ for all $v$,
by (\ref{po.find.1}) we have for all $\tilde{\mu}\in \tilde{S}^I_\Psi$
\begin{equation}\label{po.find.3}
\langle \tilde{\mu}(u,p,t_*), (h,\eta) \rangle = h(t_*) - \eta
\end{equation}
for all $(u,p)\in Z^I$ and all $(h,\eta)\in C(I)\times I$.

For functions $u:I\to\real$, we consider the positive part mapping
$F_{pp}$ defined by
\begin{equation}\label{po.pp.1}
(F_{pp}u)(t) = \max\{0,u(t)\} \,,
\end{equation}
which maps $L^q(I)$ as well as $C(I)$ into itself. Let $H:\real\toto\real$
be the set-valued Heaviside function
\begin{equation}\label{po.pp.2}
H(x) = \begin{cases} 0 \,, & x < 0 \,, \\ [0,1] \,,& x = 0 \,,\\ 1 \,,& x > 0 \,.
\end{cases}
\end{equation}
The mapping $H$ is usc. By
\begin{equation}\label{po.pp.21}
S_H = \{\lambda_H: \text{$\lambda_H$ selector of $H$, $\lambda_H(0)\in\mathbb{Q}$}\}
\end{equation}
we define a countable family of measurable selectors of $H$ whose values are
dense in the range of $H$. We then define
\begin{equation}\label{po.pp.3}
\begin{split}
G_{pp}(u) : L^{\tilde{q}}(I)\toto L(L^{\tilde{q}}(I), L^q(I)) \qquad \quad
\\
G_{pp}(u) = \{L: L(h) = (\lambda_H\circ u)\cdot h ,\, \lambda_H\in S_H \} \,.
\end{split}
\end{equation}
\begin{lemma}\label{po.ppnd}
The mapping $G_{pp}$ is a Newton derivative of
$F_{pp}: L^{\tilde{q}}(I)\to L^q(I)$ for $1\le q < \tilde{q}\le \infty$.
\end{lemma}

\begin{proof}
This is a well-known result, see Proposition 3.49 in \cite{Ulb11} or
Example 8.14 in \cite{IK}.
\end{proof}

With the composition $F_{pp}\circ \tilde{F}^I$ we associate the set-valued mapping
\begin{equation}\label{po.pp.5}
\begin{split}
\tilde{\Psi}^I_{pp}: C(I)\times\real\times I \toto \real
\\
\tilde{\Psi}^I_{pp}(u,p,t) = H(\tilde{F}^I(u,p)(t)) \,.
\end{split}
\end{equation}
By the definition of $H$,
\begin{equation}\label{po.pp.6}
\tilde{\Psi}^I_{pp}(u,p,t) = \{0\} \,,\quad \text{if $\tilde{F}^I(u,p)(t) < 0$.}
\end{equation}
\begin{lemma}\label{po.ppset}
The mapping $\tilde{\Psi}^I_{pp}$ defined in (\ref{po.pp.6}) is usc and has compact
values. A set of measurable selectors is given by
\begin{equation}\label{po.pp.7}
\tilde{S}^I_{pp} = \{\tilde{\lambda}:
\tilde{\lambda}(u,p,t) = \lambda_H(\tilde{F}^I(u,p)(t)),\, \lambda_H\in S_H\} \,.
\end{equation}
\end{lemma}

\begin{proof}
The mapping $\tilde{F}^I: C(I)\times\real\to C(I)$ as well as the mapping
$(v,t)\mapsto v(t)$ are continuous on $C(I)\times\real$, and $H$ is usc and
has compact values.
Therefore, $\tilde{\Psi}^I_{pp}$ is usc by Lemma \ref{svm.comp}, has compact values,
and the elements of $\tilde{S}^I_{pp}$ are measurable functions.
\end{proof}
We have now all ingredients to define a Newton derivative $G_+^I$ of
$P_+^I$. Its elements $L_+^I\in G_+^I(u,p)$ are expected to have the form
\begin{equation}\label{po.ndip.1}
L_+^I(h,\eta)(t) = \eta + \lambda_H(\tilde{F}^I(u,p)(t))
\cdot\langle \mu^I(u,t) , h-\eta \rangle
\end{equation}
with functions $\lambda_H\in S_H$ and measures $\mu^I\in S_\Phi^I$.
The associated set-valued mapping is given by
\begin{equation}\label{po.ndip.2}
\begin{split}
\Psi_+^I: C(I)\times\real\times I \toto (C(I)\times\real)^*
\qquad
\\
\Psi_+^I(u,p,t) = \pi_2 + \tilde{\Psi}^I_{pp}(u,p,t) \cdot \tilde{\Psi}^I(u,p,t)\,,
\end{split}
\end{equation}
where $\pi_2$ denotes the projection $\pi_2:C(I)\times\real\to\real$,
$\pi_2(h,\eta) = \eta$.
(The elementwise multiplication $\tilde{\Psi}^I_{pp}\cdot\tilde{\Psi}^I$ makes
sense since $\tilde{\Psi}^I_{pp}$ takes values in $\real$.)
We define
\begin{equation}\label{po.ndip.3}
S_+^I = \{\tilde{\nu}: \tilde{\nu}(u,p,t) =
\pi_2 + \tilde{\lambda}(u,p,t)\tilde{\mu}(u,p,t),\,
\tilde{\mu}\in \tilde{S}_\Psi^I,\,\tilde{\lambda}\in\tilde{S}^I_{pp}\} \,.
\end{equation}
\begin{proposition}\label{po.ndip}
The mapping $\Psi_+^I$ in (\ref{po.ndip.2}) is usc and has $w^*$-compact
values.
The set $S_+^I$ given in (\ref{po.ndip.3}) consists of measurable
selectors of $\Psi_+^I$.
The mapping
\begin{equation}\label{po.ndip.4}
\begin{split}
G_+^I: Z_+^I\cap (X\times\real) \toto \mathcal{L}(X\times\real,L^q(I))
\qquad \qquad \qquad
\\
G_+^I(u,p) = \{L_+^I: \text{$L_+^I$ given by (\ref{po.ndip.1}) with
$\lambda_H\in S_H$, $\mu^I\in S_\Phi^I$}\}
\end{split}
\end{equation}
is a Newton derivative of $P_+^I: Z_+^I\cap (X\times\real) \to L^q(I)$
for every $q < \infty$.
The elements $L_+^I$ of $G_+^I(u,p)$ satisfy, 
for all $(u,p)\in Z_+^I\cap (X\times\real)$,
\begin{equation}\label{po.ndip.5}
\|L_+^I(h,\eta)\|_{\infty,t} \le \max\{\|h\|_{\infty,t},|\eta|\}
\end{equation}
for all $h\in C(I)$, $\eta\in\real$. 
Moreover, for all such $(u,p)$ the remainder estimate
\begin{equation}\label{po.ndip.6}
\begin{split}
&\sup_{L_+^I\in G_+^I(u+h,p+\eta)}
\|P_+^I(u+h,p+\eta) - P_+^I(u,p) - L_+^I(h,\eta)\|_{L^q(I)} 
\\&\qquad \qquad  \le 
\rho_{(u,p)}(\|h\|_\infty + |\eta|) \|(h,\eta)\|_{X\times\real}
\end{split}
\end{equation}
holds for all $h\in X$ with $u+h\in Z_+^I$ and all $\eta\in\real$. 
The remainder term $\rho_{(u,p)}$ satisfies $\rho_{(u,p)}(\delta\downarrow 0$
as $\delta\downarrow 0$ and is uniformly bounded in $(u,p)$.
\end{proposition}

\begin{proof}
Due to Proposition \ref{po.find} and Lemma \ref{po.ppset},
we may apply Proposition \ref{svm.psicomp} with
$F_1(u,p,t) = \tilde{F}^I(u,p)(t)$, $\Psi_1 = \tilde{\Psi}^I$ and
$\Psi_2(u,p,t) = \pi_2 + \tilde{\Psi}^I_{pp}(u,p,t)\pi_3$, that is,
the elements of $\Psi_2$ have the form
\[
L_2(h,\eta,y) = \eta + L_{pp}^t\cdot y \,,\quad L_{pp}^t\in \tilde{\Psi}^I_{pp} \,.
\]
This shows that $\Psi_+^I$ is usc and has $w^*$-compact values.

Due to Proposition \ref{po.find} and Lemma \ref{po.ppnd}, the assumptions
of the refined chain rule, Proposition \ref{rr.newcomp}, are satisfied 
with $\tilde{X} = C(I)$, $Y = \tilde{Y} = L^{\tilde{q}}(I)$ for some
$\infty > \tilde{q} > q$, $Z = L^q(I)$. This proves (\ref{po.ndip.6}).
The estimate (\ref{po.ndip.5}) follows from (\ref{po.ndip.1}) and 
(\ref{mam.l.1}), as
$\lambda_H$ takes values in $[0,1]$ and, 
setting $\lambda_t = \lambda_H(\tilde{F}^I(u,p)(t))$,
\[
L_+^I(h,\eta)(t) = (1-\lambda_t)\eta + \lambda_t \langle \mu^I(u,t) , h\rangle \,.
\]
Since $P_+^I$ is global Lipschitz continuous w.r.t. the maximum norm, 
the final assertion, too, follows in view of (\ref{po.ndip.5}).
\end{proof}
We also need a variant of the preceding proposition. For $I = [t_*,t^*]$ we define
\begin{equation}\label{po.ndipf.1}
P_{+,*}^I: Z_+^I\to\real \,,\quad P_{+,*}^I(u,p) = P_+^I(u,p)(t^*) \,.
\end{equation}
According to (\ref{po.ndip.1}), setting
\begin{equation}\label{po.ndipf.2}
L_{+,*}^I(h,y) = L_+^I(h,y)(t^*) \,,\quad L_+^I\in G_+^I(u,p) \,,
\end{equation}
yields a well-defined element $L_{+,*}^I\in (C(I)\times\real)^*$.
\begin{proposition}\label{po.ndipf}
The mapping
\begin{equation}\label{po.ndipf.4}
\begin{split}
G_{+,*}^I: Z_+^I\cap (X\times\real) \toto (X\times\real)^*
\qquad 
\\
G_{+,*}^I(u,p) = \{L_{+,*}^I: \text{$L_{+,*}^I$ given by (\ref{po.ndipf.2})} \}
\end{split}
\end{equation}
is a Newton derivative of $P_{+,*}^I: Z_+^I\cap (X\times\real) \to \real$.
The elements $L_{+,*}^I$ of $G_{+,*}^I(u,p)$ satisfy, 
for all $(u,p)\in Z_+^I\cap (X\times\real)$,
\begin{equation}\label{po.ndipf.5}
|L_+^I(h,\eta)| \le \max\{\|h\|_{\infty,t^*},|\eta|\}
\end{equation}
for all $h\in C(I)$, $\eta\in\real$. 
Moreover, for all such $(u,p)$ the remainder estimate
\begin{equation}\label{po.ndipf.6}
\begin{split}
&\sup_{L_{+,*}^I\in G_{+,*}^I(u+h,p+\eta)}
|P_{+,*}^I(u+h,p+\eta) - P_{+,*}^I(u,p) - L_{+,*}^I(h,\eta)| 
\\&\qquad \qquad  \le 
\rho_{(u,p)}(\|h\|_\infty + |\eta|) \|(h,\eta)\|_{X\times\real}
\end{split}
\end{equation}
holds for all $h\in X$ with $u+h\in Z_+^I$ and all $\eta\in\real$. 
The remainder term $\rho_{(u,p)}$ satisfies $\rho_{(u,p)}(\delta)\downarrow 0$
as $\delta\downarrow 0$ and is uniformly bounded in $(u,p)$.
\end{proposition}
\begin{proof}
We proceed in a manner analogous to the proof of Proposition \ref{po.ndip}. 
We apply Proposition \ref{rr.newcomp} to the decomposition
\[
P_{+,*}^I(u,p) = p + \max\{0,\max_I (u-p-r)\}
\]
The Newton derivative of the inner maximum satisfies the refined remainder
estimate given in Proposition \ref{nmf.nbder}. The outer maximum is just
the positive part mapping on $\real$.
\end{proof}
A Newton derivative $G_-^I$ of the mapping
\begin{equation}\label{po.ndin.0}
P_-^I(u,p) = p - F_{pp}(\tilde{F}^I(-u,-p))
\end{equation}
is obtained with analogous computations.
Its elements $L_-^I\in G_-^I(u,p)$ have the form
\begin{equation}\label{po.ndin.1}
L_-^I(h,\eta)(t) = \eta - \lambda_H(\tilde{F}^I(-u,-p)(t))
\cdot\langle \mu^I(-u,t) , -h+\eta \rangle
\end{equation}
with functions $\lambda_H\in S_H$ and measures $\mu^I\in S_\Phi^I$.
The associated set-valued mapping $\Psi_-^I$ and a set $S_-^I$ of
measurable selectors is given by
\begin{equation}\label{po.ndin.2}
\begin{split}
\Psi_-^I: C(I)\times\real\times I \toto (C(I)\times\real)^*
\qquad \qquad \qquad \qquad
\\
\Psi_-^I(u,p,t) = \pi_2 - \tilde{\Psi}^I_{pp}(-u,-p,t) \cdot (-\tilde{\Psi}^I(-u,-p,t))\,,
\qquad \qquad
\\
S_-^I = \{\tilde{\nu}: \tilde{\nu}(u,p,t) =
\pi_2 + \tilde{\lambda}(-u,-p,t)\tilde{\mu}(-u,-p,t),\,
\tilde{\mu}\in \tilde{S}_\Psi^I,\,\tilde{\lambda}\in\tilde{S}^I_{pp}\} \,.
\end{split}
\end{equation}
where $\pi_2$ again denotes the projection $\pi_2:C(I)\times\real\to\real$,
$\pi_2(h,\eta) = \eta$.

The analogue of Proposition \ref{po.ndipf} also holds on minus intervals.

We combine $G^I_\pm$ and $\Psi^I_\pm$ into mappings $G^I$ and $\Psi^I$.
Indeed, on $Z_+^I\cap Z_-^I$, we have
$\tilde{F}^I(u,p) < 0$ and $\tilde{F}^I(-u,-p) < 0$ by Lemma \ref{po.dec.wpr}.
Consequently,
\[
\tilde{\Psi}^I_{pp}(u,p,t) = \{0\} \,,\quad
\Psi_\pm^I(u,p,t) = \{\pi_2\}
\]
for all $(u,p)\in Z_+^I\cap Z_-^I$, $t\in I$.
The argument of $\lambda_H$
in the representations (\ref{po.ndip.1}) and (\ref{po.ndin.1}) is negative,
therefore $L_\pm^I(h,\eta)(t) = \eta$ on $I$. As the sets $Z_\pm^I$ are open
subsets of $Z^I$, from Proposition \ref{po.ndip} and
the corresponding result for $P_-^I$ we get the following result.
\begin{proposition}\label{po.ndi}
The mapping $\Psi^I: Z^I\times I \toto (C(I)\times\real)^*$ defined by
$\Psi^I = \Psi_\pm^I$ on $Z_\pm^I$ is well-defined and usc and has
$w^*$-compact values.
The set
\begin{equation}\label{po.ndi.1}
S^I = \{\nu: \text{$\nu(u,p,t) = \tilde{\nu}_\pm(u,p,t)$
with $\tilde{\nu}_\pm \in S_{\pm}^I$, $(u,p,t)\in Z_\pm^I\times I$}\}
\end{equation}
consists of measurable selectors of $\Psi^I$.
The mapping $G^I: Z^I\cap (X\times\real) \toto \mathcal{L}(X\times\real,L^q(I))$
given by $G^I = G_\pm^I$ on $Z_\pm^I\cap (X\times\real)$ is well-defined and is
a Newton derivative of $P^I: Z^I\cap (X\times\real)\to L^q(I)$.
The estimates (\ref{po.ndip.5}) and (\ref{po.ndip.6}) hold with
$G^I,P^I,Z^I$ and $L^I$ in place of $G_+^I,P_+^I,Z_+^I$ and $L_+^I$, respectively. 
The remainder term $\rho_{(u,p)}$ satisfies $\rho_{(u,p)}(\delta\downarrow 0$
as $\delta\downarrow 0$ and is uniformly bounded in $(u,p)$.
\hfill $\Box$
\end{proposition}


\textbf{The initial value.}
According to (\ref{po.0}), the initial value of the play is given by
\begin{equation}\label{po.iv.1}
w_0(u,z_0) = u(a) - \pi_r(z_0) = u(a) - \max\{-r,\min\{r,z_0\}\} \,.
\end{equation}
It is well known that the mapping $R:\real\toto\real$,
\begin{equation}\label{po.iv.2}
R(x) = \begin{cases} 0 \,,& |x| > r \,,\\ [0,1] \,,& |x| = r \,,\\ 1 \,, & |x| < r
\end{cases}
\end{equation}
is a Newton derivative of $\pi_r$ and that $R$ is usc.
Then
\begin{equation}\label{po.iv.3}
S_R = \{\lambda_0: \text{$\lambda_0$ selector of $R$, $\lambda_0(\pm r)\in\mathbb{Q}$}\}
\end{equation}
defines a countable family of measurable selectors of $R$.
We set
\begin{equation}\label{po.iv.5}
\begin{split}
\Psi_0: C[a,b]\times\real \toto (C[a,b]\times\real)^*
\\
\Psi_0(u,z_0) = \{\delta_a\}\times (-R(z_0)) \,.
\end{split}
\end{equation}
\begin{lemma}\label{po.wnull}
A Newton derivative of $w_0: C[a,b]\times\real\to\real$ is given by
\begin{equation}\label{po.wnull.1}
\begin{split}
G_0: C[a,b]\times\real \toto (C[a,b]\times\real)^* \,,
\\
G_0(u,z_0) = \{L: L(h,y) = h(a) - \lambda_0(z_0)y \,,\lambda_0\in S_R\} \,.
\end{split}
\end{equation}
We have
\begin{equation}\label{po.wnull.3}
|L(h,y)| \le \|h\|_\infty + |y|
\end{equation}
for all $L\in G_0(u,z_0)$ and all $(u,z_0)\in C[a,b]\times\real$.
\end{lemma}
\begin{proof}
Let $L\in G_0(u,z_0)$. Then for all $(h,y)\in C[a,b]\times\real$ we have
\begin{align}
&|w_0(u+h,z_0+y) - w_0(u,z_0) - L(h,y)| =
|(\pi_r(z_0+y) - \pi_r(z_0) - \lambda_0(z_0)y)|
\nonumber \\ &\qquad \qquad \label{po.wnull.4}
\le \rho(|y|)|y|
\end{align}
with some $\rho(\delta) \downarrow 0$ as $\delta\downarrow 0$, since $R$
is a Newton derivative of $\pi_r$.
\end{proof}

\textbf{A Newton derivative on a partition for small input oscillations.}

Let $\Delta = \{t_k\}_{0\le k\le N}$ be a partition of $[a,b]$.
According to Lemma \ref{po.decp}, on the set $Z^\Delta$ of small input
oscillations, see (\ref{po.part.1}), the play can be written as
a composition of the mappings $P^{I_k}$ which belong to the partition
intervals $I_k = [t_{k-1},t_k]$.
Consequently, we obtain a Newton derivative of the play on $Z^\Delta$
as a composition of the Newton derivatives of $P^{I_k}$ as follows.

We define $w_k^\Delta: Z^\Delta\to\real$ and $P_k^\Delta: Z^\Delta\to C(I_k)$,
setting $w_0^\Delta = w_0$ and $\Psi_0^\Delta = \Psi_0$ from (\ref{po.iv.1})
and (\ref{po.iv.5}), and for $k\ge 1$
\begin{equation}\label{po.wkd.0}
\begin{split}
w_k^\Delta(u,z_0) &= P^{I_k}(u,w_{k-1}^\Delta(u,z_0))(t_k) \,,
\\
P_k^\Delta(u,z_0)(t) &= P^{I_k}(u,w_{k-1}^\Delta(u,z_0))(t) \,,\quad t\in I_k \,.
\end{split}
\end{equation}
Using Lemma \ref{po.decp} successively we see that
$P_k^\Delta(u,z_0) = \playr[u;z_0]$ on $I_k$.

We define $\Psi_k^\Delta: Z^\Delta\times I_k \toto (C[a,b]\times\real)^*$ by
$\Psi_0^\Delta = \Psi_0$ and, for $k\ge 1$,
\begin{align}\label{po.psikd.1}
&\Psi_k^\Delta(u,z_0,t) =
\\ \nonumber &\quad \{L\circ (\pi_1,L_{k-1}):\,
L\in\Psi^{I_k}(u,w_{k-1}^\Delta(u,z_0),t),\,L_{k-1}\in\Psi_{k-1}^\Delta(u,z_0,t_{k-1})\}
\end{align}
Here $\pi_1$ denotes the projection $\pi_1:C[a,b]\times\real\to C(I_k)$,
$\pi_1(u,z_0) = u|I_k$.
The mappings $\Psi^{I_k}$ have been constructed in Proposition \ref{po.ndi}.
The elements $L_k$ of $\Psi_k^\Delta(u,z_0,t)$ have the form
\begin{equation}\label{po.psikd.2}
L_k(h,y) = L(h,L_{k-1}(h,y)) \,,\quad h\in C[a,b],\,y\in\real \,.
\end{equation}
The sets $S_0^\Delta = S_R$,
\begin{equation}\label{po.psikd.3}
\begin{split}
S_k^\Delta &= \{\mu_k^\Delta:
\mu_k^\Delta(u,z_0,t) = \nu(u,p,t)\circ (\pi_1,\mu_{k-1}^\Delta(u,z_0,t_{k-1})),
\\ & \qquad \qquad
\nu\in S^{I_k},\,p = w_{k-1}^\Delta(u,z_0),\,
\mu_{k-1}^\Delta\in S_{k-1}^\Delta \} \,,\quad k\ge 1 \,,
\end{split}
\end{equation}
consist of measurable selectors of $\Psi_k^\Delta$.

We define $W_0^{\Delta} = G_0$ and inductively for $k\ge 1$
\begin{equation}\label{po.wkd.1}
\begin{split}
& \qquad \qquad \qquad
W_k^\Delta: Z^\Delta \toto (C[a,b]\times\real)^* \,,
\\
&W_k^\Delta(u,z_0) = \{ L_k^w: L_k^w = \mu_k^\Delta(u,z_0,t_k)
\text{ with $\mu_k^\Delta\in S_k^\Delta$}\} \,.
\end{split}
\end{equation}
The elements $L_k^w\in W_k^\Delta(u,z_0)$ satisfy
\begin{equation}\label{po.wkd.2}
\begin{split}
& \qquad \qquad
L_k^w(h,y) = L^{I_k}(h,L_{k-1}^w(h,y))(t_k) \,,
\\
&L^{I_k}\in G^{I_k}(u,w_{k-1}(u,z_0))\,,\quad
L_{k-1}^w\in W_{k-1}^\Delta(u,z_0) \,.
\end{split}
\end{equation}
We define $G_0^\Delta = G_0$ and inductively for $k\ge 1$
\begin{gather}\label{po.gkd.1}
\begin{split}
G_k^\Delta: Z^\Delta \toto \mathcal{L}(C[a,b]\times\real,L^\infty(I_k)) \,,
\qquad \qquad
\\
G_k^\Delta(u,z_0) = \{ L_k^\Delta: L_k^\Delta \text{ satisfies (\ref{po.gkd.2})
for some $\mu_k^\Delta \in S_k^\Delta$}\}
\,,
\end{split}
\\ \label{po.gkd.2}
L_k^\Delta(h,y)(t) = \langle \mu_k^\Delta(u,z_0,t)\,,\,(h,y)\rangle \,.
\end{gather}
The mappings $L_k^\Delta$ satisfy
\begin{equation}\label{po.gkd.3}
\begin{split}
& \qquad \quad
L_k^\Delta(h,y)(t) = L^{I_k}(h,L_{k-1}^w(h,y))(t) \,,
\\
&L^{I_k}\in G^{I_k}(u,w_{k-1}(u,z_0))\,,\quad
L_{k-1}^w\in W_{k-1}^\Delta(u,z_0) \,.
\end{split}
\end{equation}

\begin{proposition}\label{po.gwknd}
Let $0\le k\le N$, $1\le q < \infty$. \hfill\\
(i) The mapping
$\Psi_k^\Delta: Z^\Delta\times I_k \toto (C[a,b]\times\real)^*$ is usc
and has $w^*$-compact values, $S_k^\Delta$ is a set of measurable selectors
of $\Psi_k^\Delta$.
\hfill\\
(ii) The mapping
$W_k^\Delta: Z^\Delta\cap(X\times\real) \toto (C[a,b]\times\real)^*$
is a Newton derivative of $w_k^\Delta: Z^\Delta\cap (X\times\real)\to\real$.
The elements $L_k^w$ of $W_k^\Delta$ satisfy the estimate
\begin{equation}\label{po.gwknd.01}
|L_k^w(h,y)| \le \|h\|_{\infty,t_k} + |y|
\end{equation}
for all $h\in C[a,b]$ and $y\in\real$, uniformly in $(u,z_0)$.
Moreover, for all such $(u,z_0)$ the remainder estimate
\begin{equation}\label{po.gwknd.02}
\begin{split}
&\sup_{L_k^w\in W_k^\Delta(u+h,z_0+y)}
|w_k^\Delta(u+h,z_0+y) - w_k^\Delta(u,z_0) - L_k^w(h,y)|
\\&\qquad \qquad  \le 
\rho_{(u,z_0)}(\|h\|_\infty + |y|) \|(h,y)\|_{X\times\real}
\end{split}
\end{equation}
holds for all $h\in X$ with $u+h\in Z_+^I$ and all $y\in\real$. 
The remainder term $\rho_{(u,z_0)}$ satisfies $\rho_{(u,z_0)}(\delta)\downarrow 0$
as $\delta\downarrow 0$ and is uniformly bounded in $(u,z_0)$.
\hfill\\ 
(iii) The mapping
$G_k^\Delta: Z^\Delta\cap(X\times\real) \toto \mathcal{L}(X\times\real,L^q(I_k))$
is a Newton derivative of $P_k^\Delta$.
The elements $L_k^\Delta$ of $G_k^\Delta(u,z_0)$ satisfy the estimate
\begin{equation}\label{po.gwknd.1}
\|L_k^\Delta(h,y)\|_{\infty,t} \le \|h\|_{\infty,t} + |y|
\end{equation}
for all $h\in C[a,b]$ and $y\in\real$, uniformly in $(u,z_0)$.
Moreover, for all such $(u,z_0)$ the remainder estimate
\begin{equation}\label{po.gwknd.2}
\begin{split}
&\sup_{L_k^w\in W_k^\Delta(u+h,z_0+y)}
|P_k^\Delta(u+h,z_0+y) - P_k^\Delta(u,z_0) - L_k^\Delta(h,y)|
\\&\qquad \qquad  \le 
\rho_{(u,z_0)}(\|h\|_\infty + |y|) \|(h,y)\|_{X\times\real}
\end{split}
\end{equation}
holds for all $h\in X$ with $u+h\in Z_+^I$ and all $y\in\real$. 
The remainder term $\rho_{(u,z_0)}$ satisfies $\rho_{(u,z_0)}(\delta)\downarrow 0$
as $\delta\downarrow 0$ and is uniformly bounded in $(u,z_0)$.
\hfill\\ 
\end{proposition}

\begin{proof}
We proceed by induction over $k$. The case $k=0$ is treated in Lemma \ref{po.wnull}.
Now assume the result is proved for $k-1$.
\hfill\\
(i) We apply Proposition \ref{svm.psicomp}, setting there $J = [a,b]$,
$I = I_k$, $U = C^\Delta$ and
\begin{gather*}
F_1(u,z_0,t) = w_{k-1}^\Delta(u,z_0) \,,\quad
F_1:C^\Delta\times\real\times I_k \to \real \,,
\\
F_2(u,z_0,p,t) = P^{I_k}(u,p)(t) \,,\quad
F_2:C^\Delta\times\real\times\real\times I_k\to \real \,,
\\
\Psi_1(u,z_0,t) = \Psi_{k-1}^\Delta(u,z_0,t_{k-1}) \,,\quad
\Psi_2(u,z_0,p,t) = \Psi^{I_k}(u,p,t) \,,\quad
\Psi = \Psi_k^\Delta \,.
\end{gather*}
(ii) We apply Proposition \ref{rr.newcomp} to the decomposition
\[
(u,z_0) \mapsto (u,w_{k-1}^\Delta(u,z_0)) \mapsto w_k^\Delta(u,z_0)
\]
given by the first equation in (\ref{po.wkd.0}).  
Its assumptions are satisfied by the induction hypothesis and by
Proposition \ref{po.ndipf}. (\ref{po.gwknd.01}) follows from the
estimate
\begin{align*}
|L_k^w(h,y)| &\le \|L^{I_k}(h,L_{k-1}^w(h,y))\|_{\infty,t_k}
\le \max\{\|h\|_{\infty,t_k},\|h\|_{\infty,t_k} + |y|\} 
\\ &= 
\|h\|_{\infty,t_k} + |y| \,.
\end{align*}
(iii) This follows as in (ii), using Proposition \ref{po.ndi} instead of
Proposition \ref{po.ndipf}, as well as the estimate 
\begin{align*}
\|L_k^\Delta(h,y)\|_{\infty,t} &= \|L^{I_k}(h,L_{k-1}^w(h,y))\|_{\infty,t}
\le \max\{\|h\|_{\infty,t},\|h\|_{\infty,t} + |y|\} 
\\ &= 
\|h\|_{\infty,t} + |y| \,.
\end{align*}
\end{proof}

\textbf{A Newton derivative of the play on the whole space {\boldmath $X\times\real$.}}

Let $\{\Delta_n\}$ be a sequence of partitions of $[a,b]$
such that $|\Delta_n|\to 0$ as $n\to\infty$ and that $\Delta_{n+1}$ is obtained
from $\Delta_n$ by adding a single point $t\notin\Delta_n$, starting from
$\Delta_1 = \{a,b\}$. We have
\begin{equation}\label{po.ws.0}
Z^{\Delta_{n}} \subset Z^{\Delta_{n+1}} \,,\quad
C[a,b]\times\real = \bigcup_{n\in\nat} Z^{\Delta_n}
\end{equation}
and consequently
\begin{equation}\label{po.ws.1}
X\times \real = \bigcup_{n\in\nat} (Z^{\Delta_n}\cap (X\times\real))
= \bigcup_{n\in\nat} (X^{\Delta_n}\times\real) \,.
\end{equation}
We construct a Newton derivative $G$ of the play on $X\times\real$
from the Newton derivatives $G_k^{\Delta_n}$ of $P_k^{\Delta_n}$ obtained
in Proposition \ref{po.gwknd}. This is done in two steps. In the first step, we
define a Newton derivative $G^{\Delta_n}$ of the play on
$Z^{\Delta_n}\cap (X\times\real)$; in the second step we glue together
these derivatives using Proposition \ref{nwd.part}.

Since the elements of $G_k^{\Delta_n}$ are obtained as measurable selectors
of the set-valued mappings $\Psi_k^{\Delta_n}$, we first
combine those to a mapping $\Psi^{\Delta_n}$.
In order to do this, we consider the following situation.

Let $\Delta = \{t_j\}$, $a = t_0 < \dots < t_N = b$, be a partition of $[a,b]$,
let $I_j = [t_{j-1},t_j]$ for $1\le j\le N$, where $N\ge 1$.
If $N > 1$, let $\Delta'$ be the partition which results from $\Delta$ when
we remove a single point $t_k$, $k\in \{1,\dots,N-1\}$.
\begin{proposition}\label{po.psid}
(i) If $N > 1$,
\begin{equation}\label{po.psid.2}
\Psi_j^{\Delta}(u,z_0,t) =  \Psi^{\Delta'}(u,z_0,t)
\end{equation}
for all $(u,z_0)\in Z^{\Delta'},\, t\in I_j,\,1\le j \le N$. \hfill\\
(ii) The mapping
$\Psi^\Delta: Z^\Delta\times [a,b] \toto (C[a,b]\times\real)^*$ defined by
\begin{equation}\label{po.psid.1}
\Psi^\Delta(u,z_0,t) = \Psi_j^\Delta(u,z_0,t) \,,\quad \text{if $t \in I_j$,}
\end{equation}
is well-defined, usc and has $w^*$-compact values. \hfill\\
(iii) The set
\begin{equation}\label{po.psid.3}
\begin{split}
S^\Delta &= \{\mu^\Delta: \text{$\mu^\Delta$ selector of $\Psi^\Delta$},\,
\mu^\Delta = \mu_1^\Delta \text{ on } Z^\Delta\times [t_0,t_1],\,
\\ &\qquad \qquad
\mu^\Delta = \mu_k^\Delta \text{ on } Z^\Delta\times (t_{k-1},t_k]
\text{ for $k>1$},\, \mu_k^\Delta \in S_k^\Delta \}
\end{split}
\end{equation}
consists of measurable selectors of $\Psi^\Delta$.
\end{proposition}
\begin{proof}
The proof proceeds by induction on $N$, the number of partition points
of $\Delta$ being equal to $N+1$.
For $N=1$ we have $\Psi^\Delta = \Psi_1^\Delta$, (i) is empty, and (ii)
has been proved already in Proposition \ref{po.gwknd}.
For the induction step $N-1 \to N$ we assume that $\Psi^{\Delta'}$ is
well-defined.

In order to prove (\ref{po.psid.2}), it suffices to show that
\begin{equation}\label{po.psid.6}
\Psi_{k+1}^\Delta = \Psi_k^{\Delta'} \quad \text{on $Z^{\Delta'}\times I_{k+1}$.}
\end{equation}
Indeed, as the refinement by adjoining $\{t_k\}$ to $\Delta'$ does not change the
partition intervals $I_j$ contained in $[a,t_{k-1}]$ and in $[t_{k+1},b]$, we have
$\Psi_j^\Delta = \Psi_j^{\Delta'}$ on $Z^{\Delta'}\times I_j$ for
all $j\le k$, and (once we have shown that
$\Psi_{k+1}^\Delta = \Psi_k^{\Delta'}$ on $Z^{\Delta'}\times \{t_{k+1}\}$) also
$\Psi_{j+1}^\Delta = \Psi_j^{\Delta'}$ on $Z^{\Delta'}\times I_{j+1}$
for all $k < j < N$. (\ref{po.psid.6}) will be proved in Lemma \ref{po.teil}
below.

We now prove (ii). For $(u,z_0)\in Z^{\Delta'}$, (\ref{po.psid.1}) holds
by (i). Let $(u,z_0)\in Z^\Delta$. We define $\tilde{u}\in C[a,b]$ by
\[
\tilde{u}(t) = \begin{cases} u(t) \,,& t\in [a,t_{N-1}] \,,\\
u(t_{N-1}) \,, & t\in I_N = [t_{N-1},b] \,.
\end{cases}
\]
Then $(\tilde{u},z_0)\in Z^{\Delta'}$. By (i),
we have
\[
\Psi_j^{\Delta}(u,z_0,t) = \Psi_j^{\Delta}(\tilde{u},z_0,t)
= \Psi^{\Delta'}(\tilde{u},z_0,t)
\]
holds for all $t\in I_j]$, $1\le j \le N-1$.
In particular,
\[
\Psi_{j+1}^\Delta(u,z_0,t_j) = \Psi_j^\Delta(u,z_0,t_j)
\quad \text{for all $1\le j \le N-1$.}
\]
This shows that $\Psi^\Delta$ is well-defined by (\ref{po.psid.1}) if
$(u,z_0)\in Z^\Delta$.
That $\Psi^\Delta$ is usc and has $w^*$-compact values now follows from
Lemma \ref{svm.uscext}.

To prove (iii) it suffices to observe that the functions $\mu_k^\Delta$
are measurable.
\end{proof}

The proof of Proposition \ref{po.psid} is based on Lemma \ref{po.teil} which
in turn is based on Lemma \ref{po.cases}. The proof of Lemma \ref{po.cases}
only uses results derived before and up to Proposition \ref{po.gwknd}.

\begin{lemma}\label{po.teil}
We have
\begin{equation}\label{po.teil.1}
\Psi_{k+1}^\Delta(u,z_0,t) = \Psi_k^{\Delta'}(u,z_0,t)
\end{equation}
for all $(u,z_0)\in Z^{\Delta'}$ and all $t\in I_{k+1}$.
\end{lemma}

\begin{proof}
Let $(u,z_0)\in Z^{\Delta'}$ and $t\in I_{k+1} = [t_k,t_{k+1}]$ be given,
set $I' = [t_{k-1},t]$. Moreover, set $w = \mathcal{P}_r[u;z_0]$,
$p_{k-1} = w(t_{k-1})$ and $p_k = w(t_k)$.
We assume that $(u,p_{k-1})\in Z_+^{I'}$, that is, $I'$ is a plus interval
by (\ref{po.decp.20}).
(The case of a minus interval is treated analogously.)
By (\ref{po.psikd.1}), the elements of $\Psi_k^{\Delta'}(u,z_0,t)$ have the form
$L'\circ (\pi_1,L_{k-1})$ with
\begin{equation}\label{po.teil.5}
L'\in \Psi_+^{I'}(u,p_{k-1},t) =: \Psi' \,,\quad
L_{k-1}\in \Psi_{k-1}^{\Delta'}(u,z_0,t_{k-1}) \,.
\end{equation}
For the same reason, the elements of $\Psi_{k+1}^\Delta(u,z_0,t)$
have the form $L\circ (\pi_1,L_k)$ with
\begin{equation}\label{po.teil.6}
L\in \Psi_+^{I_{k+1}}(u,p_k,t) =: \Psi \,,\quad
L_k\in \Psi_k^\Delta(u,z_0,t_k) \,,
\end{equation}
and the elements $L_k$ of $\Psi_k^\Delta(u,z_0,t_k)$ have the form
$L''\circ (\pi_1,L_{k-1})$ with
\begin{equation}\label{po.teil.7}
L''\in \Psi_+^{I_k}(u,p_k,t_k) =: \Psi'' \,,\quad
L_{k-1}\in \Psi_{k-1}^{\Delta}(u,z_0,t_{k-1}) \,.
\end{equation}
Since $\Psi_{k-1}^\Delta = \Psi_{k-1}^{\Delta'}$ on $Z^{\Delta'}\times I_{k-1}$,
in order to prove (\ref{po.teil.1}) it suffices to prove that
\begin{equation}\label{po.teil.10}
\Psi' = \{L\circ (\pi_1,L''): L\in\Psi,\,L''\in\Psi''\} \,.
\end{equation}
This is done in Lemma \ref{po.cases} below.
\end{proof}

Let $A_1,A_2,A_3$ be sets, let $\mathcal{F}_i$ be sets of mappings from $A_i$
to $A_{i+1}$, $i=1,2$. We define the elementwise composition
\begin{equation}\label{po.cases.0}
\mathcal{F}_2 \circ \mathcal{F}_1 = \{ f_2\circ f_1: f_1\in \mathcal{F}_1,\,
f_2\in \mathcal{F}_2\} \,.
\end{equation}
We also provide a more explicit representation of $\Psi_+^I$.
Inserting (\ref{po.pi.6}) and (\ref{po.pp.5}) into (\ref{po.ndip.2}) yields
\begin{equation}\label{po.ndip.8}
\Psi_+^I(u,p,t) = \pi_2 + H(\tilde{F}^I(u,p)(t))\cdot(\Phi^I(u,t)\circ\pi_1 - \pi_2)\,.
\end{equation}
Here, $\Phi^I(u,t) \circ \pi_1 := \{L\circ \pi_1: L\in\Phi^I(u,t)\}$.

Setting $\sigma = \tilde{F}^I(u,p)(t)$ we get
\begin{equation}\label{po.ndip.9}
\Psi_+^I(u,p,t) = \begin{cases}
\{\pi_2\}\,, & \sigma < 0 \,, \\
\Phi^I(u,t)\circ\pi_1\,, & \sigma > 0 \,, \\
{\rm co}(\{\pi_2\}\cup (\Phi^I(u,t)\circ\pi_1))\,, & \sigma = 0 \,.
\end{cases}
\end{equation}

\begin{lemma}\label{po.cases}
Let $\Psi,\Psi'$ and $\Psi''$ be the subsets of $(C[a,b]\times\real)^*$ defined
in (\ref{po.teil.5}) -- (\ref{po.teil.7}). Then
\begin{equation}\label{po.cases.1}
\Psi' = \Psi\circ (\pi_1,\Psi'') \,.
\end{equation}
\end{lemma}

\begin{proof}
We continue to use the notations from the proof of Lemma \ref{po.teil}; again,
the case of a minus interval is treated analogously. We define
\begin{equation}\label{po.cases.2}
\sigma' = F^{I'}(u-p_{k-1}-r)(t)
\end{equation}
and obtain from (\ref{po.ndip.9})
\begin{equation}\label{po.cases.3}
\Psi' = \begin{cases}
\{\pi_2\}\,, & \sigma' < 0 \,, \\
\Phi^{I'}(u,t)\circ\pi_1\,, & \sigma' > 0 \,, \\
{\rm co}(\{\pi_2\}\cup (\Phi^{I'}(u,t)\circ\pi_1))\,, & \sigma = 0 \,.
\end{cases}
\end{equation}
For $\Psi$ and $\Psi''$ corresponding formulas hold; we replace $\sigma'$
with
\begin{equation}\label{po.cases.4}
\sigma = F^{I_{k+1}}(u-p_k-r)(t) \,,\quad
\sigma'' = F^{I_k}(u-p_{k-1}-r)(t_k) \,,
\end{equation}
and $\Phi^{I'}(u,t)$ by $\Phi^{I_{k+1}}(u,t)$ resp. $\Phi^{I_k}(u,t_k)$.
From the definitions we immediately see that $\sigma'' \le \sigma'$ and
\begin{equation}\label{po.cases.6}
p_k = p_{k-1} \quad \Rightarrow \quad \sigma' = \max\{\sigma'',\sigma\} \,,
\end{equation}
as well as
\begin{align}
\label{po.cases.71}
\sigma \le 0 \quad &\Rightarrow \quad w = p_k \quad \text{on $[t_k,t]$,}
\\
\label{po.cases.72}
\sigma' \le 0 \quad &\Rightarrow \quad w = p_{k-1} = p_k \quad \text{on $I'$,}
\\
\label{po.cases.73}
\sigma'' \le 0 \quad &\Rightarrow \quad w = p_{k-1} \quad \text{on $I_k$.}
\end{align}
In order to prove (\ref{po.cases.1}), we have to distinguish several cases.
\hfill\\ \textit{Case 1: $\sigma' < 0$.}
Then $\sigma < 0$ and $\sigma'' < 0$ by (\ref{po.cases.72}) and (\ref{po.cases.6}),
so $\Psi = \Psi' = \Psi'' = \{\pi_2\}$, and (\ref{po.cases.1}) holds.
\hfill\\ \textit{Case 2: $\sigma' = 0$.}
As in Case 1, we have
\begin{equation}\label{po.cases.21}
w = p_{k-1} = p_k \quad \text{on $I'$,} \qquad
0 = \sigma' = \max\{\sigma'',\sigma\} \,.
\end{equation}
\textit{Subcase 2a: $\sigma'' < 0$.}
Then $\sigma'' < 0 = \sigma$, so $(F^{I_k}u)(t_k) < (F^{I_{k+1}}u)(t)$ since
$p_{k-1} = p_k$. Therefore,
\[
M^{I'}(u,t) = M^{I_{k+1}}(u,t) \,,\quad
\Phi^{I'}(u,t) = \Phi^{I_{k+1}}(u,t) \,,\quad \Psi' = \Psi \,.
\]
As $\Psi'' = \{\pi_2\}$, (\ref{po.cases.1}) holds.
\hfill\\ \textit{Subcase 2b: $\sigma'' = 0$.}
By Proposition \ref{po.decbp}(iii), there exists $\tau''\in I_k$ such that
\begin{equation}\label{po.cases.23}
u(\tau'') - w(\tau'') = r = u(\tau'') - p_k \,.
\end{equation}
\textit{Subsubcase 2b1: $\sigma < 0$.}
Then on $[t_k,t]$ we have
$u \le F^{I_{k+1}}(u) < r + p_k = u(\tau'')$,
so
\[
M^{I'}(u,t) = M^{I_k}(u,t_k) \,,\quad
\Phi^{I'}(u,t) = \Phi^{I_k}(u,t_k) \,,\quad \Psi' = \Psi'' \,.
\]
As $\Psi = \{\pi_2\}$, (\ref{po.cases.1}) holds.
\hfill\\ \textit{Subsubcase 2b2: $\sigma = 0$.}
By Proposition \ref{po.decbp}(iii), there exists $\tau\in [t_k,t]$ such that
$u(\tau) - w(\tau) = r$.
Since $w$ is constant on $I'$, we have
\[
\tau\in M^{I'}(u,t) \,,\quad M^{I'}(u,t) \cap I_{k+1} = M^{I_{k+1}}(u,t) \,.
\]
For the same reason,
\[
\tau''\in M^{I'}(u,t) \,,\quad M^{I'}(u,t) \cap I_k = M^{I_k}(u,t_k) \,.
\]
This gives
\begin{equation}\label{po.cases.26}
M^{I'}(u,t) =  M^{I_k}(u,t_k) \cup M^{I_{k+1}}(u,t) \,.
\end{equation}
Setting
\[
\hat{\Phi} = \Phi^{I_{k+1}}(u,t) \,,\quad \hat{\Phi}' = \Phi^{I'}(u,t) \,,\quad
\hat{\Phi}'' = \Phi^{I_k}(u,t_k) \,,
\]
it follows from (\ref{po.cases.26}) that
\[
\hat{\Phi}'(u,t) = {\rm co}\,(\hat{\Phi}'' \cup \hat{\Phi}) \,.
\]
Since the sets involved are convex and the mappings involved are linear, we
can compute
\begin{align*}
\Psi \circ (\pi_1,\Psi'') &=
{\rm co}\,(\{\pi_2\}\cup (\hat{\Phi}\circ \pi_1)) \circ (\pi_1,\Psi'')
= {\rm co}\,(\Psi'' \cup (\hat{\Phi}\circ \pi_1))
\\ &=
{\rm co}\,(\{\pi_2\}\cup (\hat{\Phi}''\circ \pi_1)
\cup (\hat{\Phi}\circ \pi_1))
= {\rm co}\,(\{\pi_2\}\cup (\hat{\Phi}'\circ \pi_1))
= \Psi' \,.
\end{align*}
\textit{Case 3: $\sigma' > 0$.}
We have
\begin{equation}\label{po.cases.31}
p_{k-1} + \sigma' = w(t) = p_k + \max\{0,\sigma\} \,.
\end{equation}
Therefore $w(t) > p_{k-1}$,
and by Proposition \ref{po.decbp}(iii), there exists $\tau'\in I'$ such that
$u(\tau') - w(\tau') = r$.
\hfill\\ \textit{Subcase 3a: $\sigma > 0$.}
Then $w(t) > p_k = w(t_k)$, so $M^{I'}(u,t) = M^{I_{k+1}}(u,t)$ and therefore
\[
\Psi = \Phi^{I_{k+1}}(u,t)\circ\pi_1 = \Phi^{I'}(u,t)\circ\pi_1 = \Psi' \,.
\]
Since moreover $\Psi\circ(\pi_1,\Psi'') = \Psi$, (\ref{po.cases.1}) holds.
\hfill\\ \textit{Subcase 3b: $\sigma \le 0$.}
Then $p_{k-1} < w(t) = p_k = p_{k-1} + \max\{0,\sigma''\}$,
so $\sigma'' > 0$ and $\Psi'' = \Phi^{I_k}(u,t_k)\circ\pi_1$.
\hfill\\ \textit{Subsubcase 3b1: $\sigma < 0$.}
On $[t_k,t]$ we have $u-w \le (F^{I_{k+1}}u) - w < r$.
Since $u(\tau') - w(\tau') = r$, it follows that $M^{I'}(u,t) = M^{I_k}(u,t_k)$.
As $\Psi = \{\pi_2\}$,
\[
\Psi\circ(\pi_1,\Psi'') =
\Psi'' = \Phi^{I_k}(u,t_k)\circ\pi_1 = \Phi^{I'}(u,t)\circ\pi_1 = \Psi' \,.
\]
\textit{Subsubcase 3b2: $\sigma = 0$.}
By Proposition \ref{po.decbp}(iii), there exists $\tau\in [t_k,t]$ such that
$r = u(\tau) - w(\tau) = u(\tau) - p_k$. On $I'$ we have
\[
u \le w + r \le w(\tau) + r = u(\tau) \,,
\]
since $w$ is nondecreasing on $I'$ and constant on $[t_k,t]$. This implies, since
$p_{k-1} < p_k$ and $\sigma = 0$,
\begin{align*}
p_k &= w(t_k) = \max\{p_{k-1},F^{I_k}(u-r)(t_k)\} = F^{I_k}(u-r)(t_k) = u(\tau) - r
\\ &\le F^{I_{k+1}}(u-r)(t) = p_k \,,
\end{align*}
so
\[
(F^{I_{k+1}}u)(t) = (F^{I_k}u)(t_k) = (F^{I'}u)(t) \,.
\]
Consequently,
\[
M^{I'}(u,t) =  M^{I_k}(u,t_k) \cup M^{I_{k+1}}(u,t) \,.
\]
The proof of (\ref{po.cases.1}) now proceeds as in Subsubcase 2b2, the final
computation being modified to
\begin{align*}
\Psi \circ (\pi_1,\Psi'') &=
{\rm co}\,(\{\pi_2\}\cup (\hat{\Phi}\circ \pi_1)) \circ (\pi_1,\Psi'')
= {\rm co}\,(\Psi'' \cup (\hat{\Phi}\circ \pi_1))
\\ &=
{\rm co}\,((\hat{\Phi}''\circ \pi_1) \cup (\hat{\Phi}\circ \pi_1))
= \hat{\Phi}'\circ \pi_1 = \Psi' \,.
\end{align*}
\end{proof}

In order to define a Newton derivative $G^\Delta$ of the play on
$Z^\Delta \cap (X\times\real)$, we set
\begin{equation}\label{po.gdnd.0}
\begin{split}
G^\Delta: Z^\Delta \toto \mathcal{L}(C[a,b]\times\real,L^\infty(a,b)) \,,
\qquad \qquad
\\
G^\Delta(u,z_0) = \{L^\Delta:
L^\Delta(h,y)(t) = \langle \mu^\Delta(u,z_0,t),(h,y)\rangle
\\
\text{ on $[a,b]$ for some $\mu^\Delta \in S^\Delta$} \} \,.
\qquad
\end{split}
\end{equation}
\begin{proposition}\label{po.gdnd}
Let $1\le q < \infty$. \hfill\\
The mapping
$G^\Delta: Z^\Delta\cap(X\times\real) \toto \mathcal{L}(X\times\real,L^q(a,b))$
is a Newton derivative of the play
$\mathcal{P}_r: Z^\Delta\cap(X\times\real) \to L^q(a,b)$.
The elements $L^\Delta$ of $G^\Delta(u,z_0)$ satisfy the estimate
\begin{equation}\label{po.gdnd.1}
\|L^\Delta(h,y)\|_{\infty,t} \le \|h\|_{\infty,t} + |y|
\end{equation}
for all $h\in C[a,b]$ and $y\in\real$, uniformly in $(u,z_0)$.
Moreover, for all such $(u,z_0)$ the remainder estimate
\begin{equation}\label{po.gdnd.2}
\begin{split}
&\sup_{L^\Delta\in G^\Delta(u+h,z_0+y)}
\|\mathcal{P}_r[u+h;z_0+y] - \mathcal{P}_r[u;z_0] - L^\Delta(h,y)\|_{L^q(a,b)}
\\&\qquad \qquad  \le 
\rho_{(u,z_0)}(\|h\|_\infty + |y|) \|(h,y)\|_{X\times\real}
\end{split}
\end{equation}
holds for all $h\in X$ with $u+h\in Z_+^I$ and all $y\in\real$,
where $\rho_{(u,z_0)}(\delta)\downarrow 0$ as $\delta\downarrow 0$
and $\rho_{(u,z_0)}$ is uniformly bounded in $(u,z_0)$.
\end{proposition}
\begin{proof}
Let $(u,z_0)\in Z^\Delta \cap (X\times\real)$, $(h,y)\in X\times\real$
with $\|h\|_X$ small enough, and $L^\Delta\in G^\Delta(u+h,z_0+y)$.
Since $\mathcal{P}_r[u;z_0] = P_k^\Delta(u,z_0)$ and
$\mathcal{P}_r[u+h;z_0+y] = P_k^\Delta(u+h,z_0+y)$ on $I_k$, we have in view
of the definition of $S^\Delta$, $S_k^\Delta$ and $G_k^\Delta$
\begin{align*}
&\|\mathcal{P}_r[u+h;z_0+y] - \mathcal{P}_r[u;z_0] - L^\Delta(h,y)\|^q_{L^q(a,b)}
\\ &\qquad = \sum_{k=1}^N
\|P_k^\Delta(u+h,z_0+y) - P_k^\Delta(u,z_0) - L_k^\Delta(h,y)\|^q_{L^q(I_k)}
\end{align*}
for some $L_k^\Delta\in G_k^\Delta(u+h,z_0+y)$. As $G_k^\Delta$ is a Newton
derivative of $P_k^\Delta$ by Proposition \ref{po.gwknd}, (\ref{po.gwknd.1})
holds for $L_k^\Delta$, and (\ref{po.gwknd.2}) holds for the remainder,
the claim follows.
\end{proof}
On the basis of Proposition \ref{nwd.part}, we now construct a Newton derivative
of the play on $X\times\real$. We set
\[
U = X\times\real \,,\quad U_n = X^{\Delta_n}\times\real \,.
\]
Let $0 < r_1 < r_2 < \dots$ be an increasing sequence of positive numbers with
$r_n < r$ for all $n$. Let $\{I_{n,k}\}$ be the partition intervals of $\Delta_n$.
We define
\begin{equation}\label{po.gnd.0}
V_n = \{(u,z_0): u\in X,\,z_0\in\real,\, \osc_{I_{n,k}} u < r_n \text{ for all $k$}\} \,.
\end{equation}
Since $r_n < r_{n+1} < r$, we have $\overline{V}_n \subset U_n \cap V_{n+1}$.
Moreover, by (\ref{po.ws.1})
\[
\bigcup_n V_n = X\times\real = U\,,\quad 
\text{because $|\Delta_n| \to 0$ as $n\to\infty$.}
\]
Thus, all assumptions of Proposition \ref{nwd.part} are satisfied.

We finally arrive at the main result.
\begin{theorem}\label{po.gnd}
Let $1\le q < \infty$. \hfill\\
The mapping $G^{P_r}: X\times\real \toto \mathcal{L}(X\times\real,L^q(a,b))$
defined by
\begin{equation}\label{po.gnd.1}
G^{P_r}(u,z_0) = G^{\Delta_n}(u,z_0) \quad
\text{if } (u,z_0) \in \overline{V}_n \setminus \overline{V}_{n-1} \,,
\end{equation}
is a Newton derivative of the play
$\mathcal{P}_r: X\times\real \to L^q(a,b)$
with the remainder estimate
\begin{equation}\label{po.gnd.2}
\begin{split}
&\sup_{L^{\mathcal{P}_r}\in G^\Delta(u+h,z_0+y)}
\|\mathcal{P}_r[u+h;z_0+y] - \mathcal{P}_r[u;z_0] - L^{\mathcal{P}_r}(h,y)\|_{L^q(a,b)}
\\&\qquad \qquad  \le 
\rho_{(u,z_0)}(\|h\|_\infty + |y|) \|(h,y)\|_{X\times\real}
\end{split}
\end{equation}
where $\rho_{(u,z_0)}(\delta)\downarrow 0$ as $\delta\downarrow 0$. 
The elements $L^{P_r}$ of $G^r(u,z_0)$ satisfy the estimate
\begin{equation}\label{po.gnd.3}
\|L^{P_r}(h,y)\|_{\infty,t} \le \|h\|_{\infty,t} + |y|
\end{equation}
for all $h\in X$ and $y\in\real$, uniformly in $(u,z_0)$.
They have the form
\begin{equation}\label{po.gnd.4}
L^{P_r}(h,y)(t) = \langle \mu^{P_r}(u,z_0,t), (h,y)\rangle \,,\quad t\in (a,b)\,,
\end{equation}
with
\begin{equation}\label{po.gnd.5}
\mu^{P_r}(u,z_0,t) = \mu^{\Delta_n}(u,z_0,t) \quad
\text{if } (u,z_0) \in \overline{V}_n \setminus \overline{V}_{n-1} \,,\quad
\mu^{\Delta_n} \in S^{\Delta_n} \,.
\end{equation}
The functions $\mu^{P_r}:C[a,b]\times\real\times [a,b]\to (C[a,b]\times\real)^*$
are measurable.
\hfill $\Box$
\end{theorem}
\begin{proof}
This follows from Proposition \ref{nwd.part} and Proposition \ref{po.gdnd}
when we choose
\[
\rho_{u,z_0} = \max\{\rho_{n,u,z_0},\rho_{n+1,u,z_0}\} \quad
\text{if } (u,z_0) \in \overline{V}_n \setminus \overline{V}_{n-1} 
\]
with the remainder terms $\rho_{n,u,z_0}$ belonging to $G^{\Delta_n}$.
\end{proof}
Since the stop operator is related to the play operator by the formula
$\stopr[u;z_0] = u - \playr[u;z_0]$, it also has a Newton derivative.
\begin{corollary}\label{po.sond}
The stop operator
\begin{equation}\label{po.sond.1}
\stopr: X\times\real \to L^q(a,b) \,,
\quad 1\le q < \infty \,,
\end{equation}
has a Newton derivative given by
\begin{equation}\label{po.sond.2}
G^{S_r}(u,z_0) = \pi_1 - G^{P_r}(u,z_0)
\end{equation}
with elements
\begin{equation}\label{po.sond.3}
L^{S_r}(h,y) = h - L^{P_r}(h,y) \,.
\end{equation}
Here, $G^{P_r}$ and  $L^{P_r}$ have the form and properties described in
Theorem \ref{po.gnd}, and $\pi_1$ denotes the projection
$\pi_1(h,y) = h$.
\hfill$\Box$
\end{corollary}
\section{Bouligand derivative of the play and the stop}

We intend to prove that the play and the stop operator are Bouligand
differentiable from $X\times\real$ to $L^q$, $1\le q < \infty$.
It suffices to show that $\mathcal{P}_r,\mathcal{S}_r:X^\Delta\to L^q(a,b)$
are Bouligand differentiable for arbitrary partitions $\Delta$, as the
sets $X^\Delta\subset X\times\real$ are open and their union covers
$X\times\real$.

In the previous section we explained how, on $X^\Delta$, the play can
be represented as a finite composition of the positive
part $F_{pp}$, the cumulated maximum $F^I$ and continuous linear mappings.
By virtue of the chain rule, it therefore suffices to show that
$F_{pp}$ and $F^I$ are Bouligand differentiable, and that the
function spaces involved in the composition are fitting.

The positive part mapping $\beta:\real\to\real$,
$\beta(x) = \max\{x,0\}$ has the directional (in fact, Bouligand) derivative
\begin{equation}\label{bd.fpp.0}
\beta'(x;y) = \begin{cases} 0 \,,
& x < 0  \text{ or } x = 0 \,,\,y \le 0 \,,
\\ y \,, & x > 0 \text{ or } x = 0 \,,\,y > 0 \,.
\end{cases}
\end{equation}
\begin{lemma}\label{bd.fpp}
Let $I\subset [a,b]$ be a closed interval, $1\le q < \tilde{q} \le \infty$.
The mapping $F_{pp}: L^{\tilde{q}}(I) \to L^q(I)$,
$F_{pp}(u) = \max\{u,0\}$, is Bouligand differentiable, and
\begin{equation}\label{bd.fpp.1}
F_{pp}'(u;h)(t) = \beta'(u(t);h(t)) \,.
\end{equation}
\end{lemma}

\begin{proof}
See Examples 8.12 and 8.14 in \cite{IK}.
\end{proof}

It has already be proved in Proposition \ref{am.bounew} that the
cumulated maximum $F^I:X \to L^{\tilde{q}}(I)$ is Bouligand differentiable
for every $\tilde{q} < \infty$, and that
\begin{equation}\label{bd.fi.1}
(F^{I})'(u;h)(t) = \max_{s\in M^I(u,t)} h(s) \,.
\end{equation}
By the chain rule, the mapping $P^I_+: Z^I\cap (X\times\real)\to L^q(I)$,
\[
P_+^I(u,p) = p + F_{pp}(F^I(u-p-r))
\]
has the Bouligand derivative
\begin{equation}\label{bd.pi.1}
(P_+^I)'((u,p);(h,\eta))(t) = \eta +
\beta'(\max_{s\in I,s\le t} (u(s) - r - p);
\max_{s\in M^I(u,t)} (h(s) - \eta)) \,.
\end{equation}
An analogous formula holds for the Bouligand derivative of $P_-^I$.
Applying the chain rule to (\ref{po.wkd.0}), 
we obtain the Bouligand derivative of the play recursively as
\begin{equation}\label{bd.p.0}
\begin{split}
(w_{k}^\Delta)'((u,z_0);(h,y)) =
(P_k^I)'((u,w_{k-1}(u,z_0));(h,(w_{k-1}^\Delta)'((u,z_0);(h,y))))(t_k) \,,
\\
\playr'[[u;z_0];[h;y]](t) =
(P_k^I)'((u,w_{k-1}(u,z_0));(h,(w_{k-1}^\Delta)'((u,z_0);(h,y))))(t) \,.
\end{split}
\end{equation}
We also obtain the refined remainder estimate.
\begin{theorem}\label{bd.p}
The Bouligand derivative of the play operator $\mathcal{P}_r$ given in
(\ref{bd.p.0}) satisfies, for all $(u,z_0)\in X\times\real$, the remainder estimate
\begin{equation}\label{bd.p.1}
\begin{split}
&\| \playr[u+h;z_0+y] - \playr[u;z_0] - \playr'[[u;z_0];[h;y]]\|_{L^q(a,b)} 
\\ &\qquad \le \rho_{u,z_0}(\|h\|_\infty + |y|) \|(h,y)\|_{X\times\real}
\end{split}
\end{equation}
for all $h\in X$, $y\in\real$.
Here, $\rho_{(u,z_0)}(\delta)\downarrow 0$ as $\delta\downarrow 0$
and $\rho_{(u,z_0)}$ is uniformly bounded in $(u,z_0)$.
\end{theorem}
\begin{proof}
The proof is analogous to that for the Newton derivative, using
Proposition \ref{rr.boucomp} instead of Proposition \ref{rr.newcomp}.
\end{proof}

\section{The parametric play operator}

Instead of a single play operator acting on a function
$u = u(t)$, we now want to consider a family of play operators
acting on a function $u = u(x,t)$, where $x$ plays the role of
a parameter. This has been developed in \cite{Vis} in order to solve
boundary value problems for partial differential equations with
hysteresis. 
Here, we are concerned with parametrizing the Newton derivative of the play.

For a given measurable space $\Omega$ (that is, a set $\Omega$ equipped
with a sigma algebra), we want to define the \textbf{parametric play
operator} $\mathcal{P}_r^\Omega$ by
\begin{equation}\label{pp.1}
\mathcal{P}_r^\Omega[u;z_0](x,t) = \mathcal{P}_r[u(x,\cdot);z_0(x)](t)
\end{equation}
for functions $u:\Omega\times [a,b]\to\real$, $z_0:\Omega\to\real$.
The parametric play operator thus represents a parametric family of 
play operators.

For a given metric space $X$, equipped with the Borel sigma algebra,
let $\mathcal{M}(\Omega;X)$ denote the space of all measurable functions
from $\Omega$ to $X$.

\begin{lemma}\label{pp.ppdef}
Formula (\ref{pp.1}) defines an operator
\begin{equation}\label{pp.ppdef.1}
\mathcal{P}_r^\Omega: \mathcal{M}(\Omega;C[a,b])\times
\mathcal{M}(\Omega;\real)\to \mathcal{M}(\Omega;C[a,b]) \,.
\end{equation}
\end{lemma}
\begin{proof} The assignment
$x\mapsto (u(x,\cdot),z_0(x)) \mapsto \mathcal{P}_r[u(x,\cdot),z_0(x)]$
defines a mapping $\Omega\to C[a,b]\times\real \to C[a,b]$
which is measurable since $\mathcal{P}_r$ is continuous.
\end{proof}

We define the \textbf{parametric cumulated maximum} (that is,
the parametric family of cumulated maxima) $F^\Omega$
for functions $u:\Omega\to C[a,b])$ by
\begin{equation}\label{pp.am.0}
(F^\Omega u)(x) = F(u(x,\cdot)) \,,\quad x\in\Omega \,.
\end{equation}

\begin{lemma}\label{pp.am}
We have
\begin{equation}\label{pp.am.1}
\begin{split}
&F^\Omega: \mathcal{M}(\Omega;C[a,b])\to\mathcal{M}(\Omega;C[a,b]) \,,
\\
&F^\Omega: L^p(\Omega;C[a,b])\to L^p(\Omega;C[a,b]) \,,\quad 1\le p\le \infty \,.
\end{split}
\end{equation}
\end{lemma}

\begin{proof}
If $u:\Omega\to C[a,b]$ is measurable, the composition
$x\mapsto u(x,\cdot)\mapsto F(u(x,\cdot))$ defines a measurable
mapping since $F:C[a,b]\to C[a,b]$ is continuous. As
$\|(F^\Omega u)(x)\|_\infty \le \|u(x,\cdot)\|_\infty$
and because $u(x,\cdot) = v(x,\cdot)$ a.e. in $x$ implies that
$F^\Omega u = F^\Omega v$ a.e. in $x$, the second assertion in
(\ref{pp.am.1}) follows.
\end{proof}

The corresponding set-valued mappings $M^\Omega$ and $\Phi^\Omega$
are given by
\begin{equation}\label{pp.mphidef.1}
M^\Omega(u,t,x) = M(u(x,\cdot),t) \,,\quad
\Phi^\Omega(u,t,x) = \Phi(u(x,\cdot),t) \,.
\end{equation}
For any given function $u:\Omega\to C[a,b]$, these formulas define
set-valued mappings
\begin{equation}\label{pp.mphidef.2}
\begin{split}
&(x,t) \mapsto M(u(x,\cdot),t) = M^\Omega(u,t,x) \,,\quad
\Omega\times [a,b]\toto [a,b] \,,
\\
&(x,t) \mapsto \Phi(u(x,\cdot),t) = \Phi^\Omega(u,t,x) \,,\quad
\Omega\times [a,b]\toto C[a,b]^* \,.
\end{split}
\end{equation}

\begin{lemma}\label{pp.mphi}
Let $u\in \mathcal{M}(\Omega;C[a,b])$.
Then the mappings defined in (\ref{pp.mphidef.2}) are measurable.
\end{lemma}

\begin{proof}
The mappings arise as compositions
\begin{gather*}
(x,t) \mapsto (u(x,\cdot),t) \mapsto M(u(x,\cdot),t) \,,\quad
\Omega\times [a,b] \to C[a,b]\times [a,b] \toto [a,b] \,,
\\
(x,t) \mapsto (u(x,\cdot),t) \mapsto \Phi(u(x,\cdot),t) \,,\quad
\Omega\times [a,b] \to C[a,b]\times [a,b] \toto C[a,b]^* \,.
\end{gather*}
Due to Propositions \ref{mam.mtusc} and \ref{mam.phitusc}, the
assertion follows.
\end{proof}

In Proposition \ref{am.newder}, a Newton derivative $G$ of the
cumulated maximum $F$ has been constructed from measurable
selectors $\mu$ of $\Phi$.
Any such $\mu\in S_\Phi$ defines an element of $G(u(x,\cdot))$.
More precisely, given $u\in\mathcal{M}(\Omega;C[a,b])$
and $x\in\Omega$ we set
\begin{equation}\label{pp.amd.0}
[(L^\Omega(x))v](t) = \langle \mu(u(x,\cdot),t),v \rangle \,,\quad v\in C[a,b] \,.
\end{equation}
\begin{proposition}\label{pp.amd}
Let $\mu$ be a measurable selector of $\Phi$, let $u\in\mathcal{M}(\Omega;C[a,b])$.
Then (\ref{pp.amd.0}) defines a mapping
\begin{equation}\label{pp.amd.1}
L^\Omega:\Omega \to \mathcal{L}(C[a,b];L^\infty(a,b))
\end{equation}
with the property
\begin{equation}\label{pp.amd.2}
L^\Omega(x) \in G(u(x,\cdot)) \quad \text{for all $x\in\Omega$.}
\end{equation}
Let moreover $h\in\mathcal{M}(\Omega;C[a,b])$. Then
\begin{equation}\label{pp.amd.3}
(x,t) \mapsto [(L^\Omega(x))h(x,\cdot)](t) = \langle \mu(u(x,\cdot),t),h(x,\cdot) \rangle
\end{equation}
defines a measurable function from $\Omega\times [a,b]$ to $\real$.
\end{proposition}

\begin{proof}
Proposition \ref{am.newder} yields (\ref{pp.amd.1}) and (\ref{pp.amd.2}).
The composition $(x,t) \mapsto (u(x,\cdot),t) \mapsto \mu(u(x,\cdot),t)$
defines a measurable mapping from $\Omega\times [a,b]$ to $C[a,b]^*$,
since $\mu:C[a,b]\times [a,b] \to C[a,b]^*$ is measurable. As
the mapping $x\mapsto h(x,\cdot)$ is measurable and the mapping
$(\nu,v)\mapsto \langle\nu,v\rangle$ is continuous, (\ref{pp.amd.3})
defines a measurable function.
\end{proof}

We define
\begin{equation}\label{pp.amd.8}
\begin{split}
G^\Omega: \mathcal{M}(\Omega;C[a,b]) \toto
\text{Map}(\Omega;\mathcal{L}(C[a,b];L^\infty(a,b))) \qquad
\\
G^\Omega(u) = \{L^\Omega: \text{$L^\Omega$ satisfies (\ref{pp.amd.0}) and (\ref{pp.amd.1})
for some $\mu\in S_\Phi$}\} \,,
\end{split}
\end{equation}
a parametric family of Newton derivatives of the parametric family of cumulated maxima
$F^\Omega$.
It is \textbf{not} a Newton derivative of $F^\Omega$.
(Here, $\text{Map}(A;B)$ stands for the set of all mappings from a set $A$ to a set $B$.)

For the parametric play $\mathcal{P}_r^\Omega$ we proceed in the same manner.
According to Theorem \ref{po.gnd}, the Newton derivative $G^{P_r}$ of $\mathcal{P}_r$
constructed there has, when evaluated at $(u,z_0)$, elements of the form
\begin{equation}\label{pp.pnd.01}
L^{P_r}(h,y)(t) = \langle \mu^{P_r}(u,z_0,t), (h,y)\rangle \,,\quad t\in (a,b)\,,
\end{equation}
for some $\mu^{P_r}$ as given in (\ref{po.gnd.5}). We define
\begin{equation}\label{pp.pnd.02}
\begin{split}
L_r^\Omega:\Omega \to \mathcal{L}(C[a,b]\times\real;L^\infty(a,b)) \qquad \quad
\\
[(L_r^\Omega(x))(v,y_0)](t) = \langle \mu^{P_r}(u(x,\cdot),z_0(x),t),(v,y_0) \rangle
\end{split}
\end{equation}
for $(v,y_0)\in C[a,b]\times\real$.
\begin{proposition}\label{pp.pnd}
Let $\mu^{P_r}$ be as given in (\ref{po.gnd.5}), let $u\in\mathcal{M}(\Omega;C[a,b])$
and $z_0\in \mathcal{M}(\Omega;\real)$.
Then $L_r^\Omega$ as given in (\ref{pp.pnd.02}) satisfies
\begin{equation}\label{pp.pnd.1}
L_r^\Omega(x) \in G^{P_r}(u(x,\cdot),z_0(x)) \quad \text{for all $x\in\Omega$.}
\end{equation}
Let moreover $h\in\mathcal{M}(\Omega;C[a,b])$, $y\in \mathcal{M}(\Omega;\real)$. Then
\begin{equation}\label{pp.pnd.2}
(x,t) \mapsto [(L_r^\Omega(x))(h(x,\cdot),y(x)](t)
= \langle \mu^{P_r}(u(x,\cdot),z_0(x),t),(h(x,\cdot),y(x)) \rangle
\end{equation}
defines a measurable function from $\Omega\times [a,b]$ to $\real$.
\end{proposition}

\begin{proof}
Analogous to that of Proposition \ref{pp.amd}.
\end{proof}

We may define $G^\Omega_r(u,z_0)$ as the set of all such mappings $L_r^\Omega$
and view $G^\Omega_r$ as a parametric Newton derivative of the
parametric play $\mathcal{P}_r^\Omega$.

\section{Appendix: set-valued mappings}

In this section, we recall some standard results from set-valued analysis,
given e.g. in \cite{PK}, and derive some consequences needed in this paper.

Let $\Psi:X\toto Y$. We generally assume that $\Psi(u)\neq\emptyset$ for
every $u\in X$.

\begin{definition}\label{svm.uscdef}
Let $X,Y$ be Hausdorff topological spaces, let $\Psi:X\toto Y$.
We say that $\Psi$ is \textbf{upper semicontinuous}
(or \textbf{usc} for short), if
\begin{equation}\label{mam.uscdef.1}
\Psi^{-1}(A) := \{u: u\in X,\,\Psi(u)\cap A \neq \emptyset\}
\end{equation}
is closed for every closed subset $A$ of $Y$.
We say that $\Psi$ is \textbf{measurable} if $\Psi^{-1}(V)$ is
measurable for all open $V\subset Y$.
A mapping $\psi:X\to Y$ is called a \textbf{measurable selector} of
$\Psi$ if $\psi$ is measurable and $\psi(u)\in \Psi(u)$ for every $u\in X$.
\end{definition}

\begin{lemma}\label{svm.uscequiv}
Let $X,Y$ be Hausdorff topological spaces.
A mapping $\Psi:X\toto Y$ is usc if and only if for every $u\in X$ and
every open set $V$ with $\Psi(u)\subset V\subset Y$ there exists an open set
$U\subset X$ with $u\in U$ and $\Psi(U)\subset V$.
\end{lemma}

\begin{proof}
See Proposition 6.1.3 in \cite{PK}.
\end{proof}

Obviously, a single-valued mapping is continuous in the usual sense if
and only if it is usc in the sense above.

The following two lemmas are immediate consequences of Lemma \ref{svm.uscequiv}.

\begin{lemma}\label{svm.uscrest}
Let $X,Y$ be Hausdorff topological spaces, $X_0\subset X$.
Let $\Psi:X\toto Y$ be usc. Then $\Psi|X_0:X_0\toto Y$ is usc.
\hfill$\Box$
\end{lemma}

\begin{lemma}\label{svm.uscext}
Let $X,Y$ be Hausdorff topological spaces, $X = X_1 \cup X_2$
with $X_1,X_2$ open.
Let $\Psi_j:X_j\toto Y$ be usc for $j=1,2$ such that
$\Psi_1|(X_1\cap X_2) = \Psi_2|(X_1\cap X_2)$.
Then $\Psi:X\toto Y$ defined by $\Psi(u) = \Psi_j(u)$ if $u\in X_j$
is usc.
\hfill$\Box$
\end{lemma}

The composition $\Psi_2\circ\Psi_1$ of two set-valued mappings
$\Psi_1:X\toto Y$ and $\Psi_2:Y\toto Z$ is defined as
\begin{equation}\label{comp.1}
(\Psi_2\circ\Psi_1)(u) = \bigcup_{v\in\Psi_1(u)} \Psi_2(v) \,.
\end{equation}

\begin{lemma}\label{svm.comp}
Let $X,Y,Z$ be Hausdorff topological spaces, let $\Psi_1:X\toto Y$
and $\Psi_2:Y\toto Z$ be usc. Then $\Psi_2\circ \Psi_1$ is usc.
\end{lemma}

\begin{proof}
This is again straightforward, using Lemma \ref{svm.uscequiv}.
See Proposition 2.56 in \cite{HP}.
\end{proof}

We will use Lemma \ref{svm.comp} mainly for the special cases
$\Psi\circ f$ and $f \circ \Psi$ where $\Psi$ is usc and
$f$ is single-valued and continuous. 

\begin{lemma}\label{svm.complin}
Let $X,Y,Z$ be normed spaces, $U\subset X$ and $V\subset Y$ open.
Let $f:U\to V$, $g_1:U\to L(X,Y)$ and $g_2:V\to L(Y,Z)$ be
measurable. Then $g:U\to L(X,Z)$ defined by
$g(u) = g_2(f(u))\circ g_1(u)$ is measurable.
\end{lemma}

\begin{proof}
The composition is a continuous mapping from $L(X,Y)\times L(Y,Z)$
to $L(X,Z)$.
\end{proof}

%

\begin{proposition}\label{svm.grc}
Let $X,Y$ be Hausdorff topological spaces, let $\Psi:X\toto Y$.
We assume that $\Psi$ has compact values, that is,
$\Psi(u)$ is compact for all $u\in X$.
\hfill\\ (i)
If $\Psi$ is usc, then the graph of $\Psi$,
\begin{equation}\label{svm.grc.1}
{\rm Gr}\,\Psi = \{(u,v): u\in X,\, v\in \Psi(u)\}
\end{equation}
is closed in $X\times Y$.
\hfill\\ (ii)
If the graph of $\Psi$ is closed in $X\times Y$ and if $\Psi(X)$
is relatively compact in $Y$, then $\Psi$ is usc.
\end{proposition}

\begin{proof}
See Proposition 6.1.8, Remark 6.1.9 and Proposition 6.1.10 in \cite{PK}.
\end{proof}

Above we consider compositions of the form
\begin{equation}\label{svm.psicomp.1}
F(u,p,t) = F_2(u,p,F_1(u,p,t),t)
\end{equation}
for mappings $F_1:U\times\real\times I\to\real$ and
$F_2:U\times\real\times\real\times I\to\real$, where $U\subset C(J)$.
Here we are concerned with the upper semicontinuity of a
corresponding composition of mappings $\Psi_1,\Psi_2$
arising in the construction of Newton derivatives.
\begin{proposition}\label{svm.psicomp}
Let $I,J\subset\real$ be compact intervals, $U\subset C(J)$ open.
Let $F_1:U\times\real\times I\to\real$ be continuous.
Let $\Psi_1: U\times\real\times I \toto (C(J)\times\real)^*$ and
$\Psi_2: U\times\real\times\real\times I \toto (C(J)\times\real\times\real)^*$
be usc, with $w^*$-compact values, and locally bounded.
Let $\Psi: U\times\real\times I \toto (C(J)\times\real)^*$
be defined by
\begin{equation}\label{svm.psicomp.2}
\Psi(u,p,t) = \{ L_2\circ (\text{id},L_1): L_1\in \Psi_1(u,p,t),\,
L_2\in \Psi_2(u,p,F_1(u,p,t),t)\} \,,
\end{equation}
where id denotes the identity on $C(J)\times\real$.
Then $\Psi$ is usc, has $w^*$-compact values and is locally bounded.
\end{proposition}

\begin{proof}
As $\Psi_1$ and $\Psi_2$ are locally bounded, we see from (\ref{svm.psicomp.2})
and the continuity of $F_1$ that $\Psi$ is locally bounded.
Next, let $(L_1^n)$ and $(L_2^n)$ be arbitrary sequences in $(C(J)\times\real)^*$
and $(C(J)\times\real\times\real)^*$ respectively.
We claim that
\begin{equation}\label{svm.psicomp.3}
L_1^n \wsto L_1 \,,\, L_2^n \wsto L_2 \quad \Rightarrow \quad
L_2^n \circ (\text{id},L_1^n) \wsto L_2 \circ (\text{id},L_1)  \,.
\end{equation}
Indeed, for any $(h,q)\in C(J)\times\real$ we have
\[
\langle L_2^n \,,\, (h,q,L_1^n(h,q))\rangle \to
\langle L_2 \,,\, (h,q,L_1(h,q))\rangle \,.
\]
To prove that $\Psi$ has $w^*$-compact values, let
$L^n = L_2^n \circ (\text{id},L_1^n)$ be a sequence in $\Psi(u,p,t)$.
By assumption, passing to suitable subsequences we have
$L_1^n \wsto L_1\in\Psi_1(u,p,t)$ and
$L_2^n \wsto L_2\in\Psi_2(u,p,F_1(u,p,t),t)$. By (\ref{svm.psicomp.3}),
$L^n \wsto L_2 \circ (\text{id},L_1) \in\Psi(u,p,t)$.
It remains to prove that $\Psi$ is usc.
Let $A\subset (C(J)\times\real)^*$ be $w^*$-closed; it suffices to
show that $\Psi^{-1}(A)$ is closed.
Let $(u_n,p_n,t_n)\in \Psi^{-1}(A)$ and $(u_n,p_n,t_n)\to (u,p,t)$.
Let $L^n\in\Psi(u_n,p_n,t_n)\cap A$.
We have $L^n = L_2^n \circ (\text{id},L_1^n)$ for some
$L_1^n\in \Psi_1(u_n,p_n,t_n)$ and
$L_2^n\in \Psi_2(u_n,p_n,F_1(u_n,p_n,t_n),t_n)$.
Since $\Psi_1$ and $\Psi_2$ are locally bounded, passing to a subsequence
we get $L_1^n\wsto L_1$, $L_2^n\wsto L_2$.
As the graphs of $\Psi_1$ and $\Psi_2$ are $w^*$-closed by Proposition
\ref{svm.grc},
$L_1\in\Psi_1(u,p,t)$ and $L_2\in\Psi_2(u,p,F_1(u,p,t),t)$.
By (\ref{svm.psicomp.3}),
$L^n \wsto L_2 \circ (\text{id},L_1) =: L \in\Psi(u,p,t)$. As $A$
is $w^*$-closed, it follows that $L\in A$.
Thus, $\Psi^{-1}(A)$ is closed.
\end{proof}

\textbf{Acknowledgements.} 
The author thanks Michael Ulbrich in particular for pointing out the 
line of argument used in the proof of Propositions \ref{am.bounew}
and \ref{am.newder}, and him as well as Constantin Christof, Michael Hinterm\"{u}ller,
Pavel Krej\v{c}\'{\i}, Karl Kunisch and Gerd Wachsmuth for valuable discussions.

\end{document}